\numberwithin{equation}{section}
\newtheorem{letterthm}{Theorem}
\newtheorem{thm}{Theorem}[section]
\newtheorem{lem}[thm]{Lemma}
\newtheorem{prop}[thm]{Proposition}
\theoremstyle{definition}
\newtheorem{df}[thm]{Definition}
\newtheorem{claim}[thm]{Claim}
\newtheorem*{HS}{Haagerup--St\o rmer's conjecture (HSC)}
\newcommand{\R}{\mathbf{R}}
\newcommand{\C}{\mathbf{C}}
\newcommand{\N}{\mathbf{N}}
\newcommand{\rB}{\operatorname{B}}
\newcommand{\Tr}{\operatorname{Tr}}
\newcommand{\ovt}{\mathbin{\overline{\otimes}}}
\newcommand{\bigovt}{\mathbin{\overline{\bigotimes}}}
\newcommand{\Aut}{\operatorname{Aut}}
\newcommand{\Sd}{\operatorname{Sd}}
\newcommand{\Ad}{\operatorname{Ad}}
\newcommand{\id}{\operatorname{id}}
\newcommand{\Out}{\operatorname{Out}}
\newcommand{\Inn}{\operatorname{Inn}}
\newcommand{\rD}{\operatorname{D}\! }
\newcommand{\rE}{\operatorname{ E}}
\newcommand{\rC}{\operatorname{C}}
\newcommand{\rL}{\operatorname{ L}}
\newcommand{\rd}{\operatorname{ d}}
\newcommand{\Ball}{\operatorname{Ball}}
\begin{document}

\title[Pointwise inner automorphisms of almost periodic factors]{Pointwise inner automorphisms \\ of almost periodic factors}

\begin{abstract}
We prove that a large class of nonamenable almost periodic type ${\rm III_1}$ factors $M$, including all McDuff factors that tensorially absorb $R_\infty$ and all free Araki--Woods factors, satisfy Haagerup--St\o rmer's conjecture (1988): any pointwise inner automorphism of $M$ is the composition of an inner and a modular automorphism. 
\end{abstract}

\author{Cyril Houdayer}
\address{Universit\'e Paris-Saclay \\ Institut Universitaire de France \\  Laboratoire de Math\'ematiques d'Orsay\\ CNRS \\ 91405 Orsay\\ FRANCE}
\email{cyril.houdayer@universite-paris-saclay.fr}
\thanks{CH is supported by Institut Universitaire de France}

\author{Yusuke Isono}
\address{RIMS, Kyoto University, 606-8502 Kyoto, JAPAN}
\email{isono@kurims.kyoto-u.ac.jp}
\thanks{YI is supported by JSPS KAKENHI Grant Number JP20K14324}

\subjclass[2020]{46L10, 46L36, 46L54, 46L55}
\keywords{Almost periodic factors; Pointwise inner automorphisms; Type ${\rm III}$ factors}

\dedicatory{Dedicated to Yasuyuki Kawahigashi on the occasion of his 60th birthday.}
\maketitle

\section{Introduction and statement of the main results}

All the von Neumann algebras we consider in this paper are assumed to have separable predual. We denote by $R_\infty$ the Araki--Woods type ${\rm III}_1$ factor: it is the unique amenable type ${\rm III_1}$ factor with separable predual \cite{Co75, Co85, Ha85}.

Following \cite{HS87}, for any von Neumann algebra $M$, we say that an automorphism $\Theta \in \Aut(M)$ is {\bf pointwise inner} if for any normal state $\psi \in M_*$, there is $u\in \mathcal U(M)$ such that $\Theta(\psi)=u\psi u^*$. All inner automorphisms are pointwise inner. By \cite{Co72}, all modular automorphisms are pointwise inner. In a series of articles \cite{HS87, HS88, HS91}, Haagerup--St\o rmer studied the classification of pointwise inner automorphisms for various classes of von Neumann algebras. For semifinite von Neumann algebras, they showed that all pointwise inner automorphisms are inner \cite{HS87}. For type ${\rm III}_\lambda$ factors with $0 \leq \lambda < 1$, they showed that all pointwise inner automorphisms are compositions of an inner and an extended modular automorphism \cite{HS88}. For type ${\rm III_1}$ factors, they formulated the following conjecture.

\begin{HS}[\cite{HS88}]
Let $M$ be any type ${\rm III}_1 $ factor with separable predual and $\Theta\in \Aut(M)$ any automorphism. The following conditions are equivalent:
\begin{enumerate}[\rm (i)]
	\item $\Theta$ is pointwise inner.
	\item There exist $u\in \mathcal U(M)$ and $t\in \R$ such that $\Theta = \Ad(u)\circ \sigma_t^\varphi$ where $\varphi\in M_*$ is a faithful normal state. 
\end{enumerate}
\end{HS}

Haagerup--St\o rmer solved their conjecture for the Araki--Woods type ${\rm III}_1$ factor $R_\infty$ \cite{HS91}. Up to now, $R_\infty$ was the only type ${\rm III}_1$ factor known to satisfy {\bf HSC}. In this paper, we provide two natural classes of type ${\rm III}_1$ factors satisfying {\bf HSC}.

Following \cite{Co74}, we say that a factor $M$ is {\bf almost periodic} if it possesses an almost periodic faithful normal state $\varphi \in M_\ast$ in the sense that the modular operator $\Delta_\varphi$ is diagonalizable on $\rL^2(M)$. Following \cite{Sh96}, for every countable dense subgroup $\Lambda < \R_{>0}$, denote by $(T_\Lambda, \varphi_\Lambda)$ the unique almost periodic {\bf free Araki--Woods factor} of type ${\rm III_1}$ endowed with its unique free quasi-free state whose Connes'\ $\Sd$ invariant satisfies $\Sd(T_\Lambda) = \Lambda$.

Our first main theorem shows that a large class of almost periodic McDuff type ${\rm III_1}$ factors satisfy {\bf HSC}.

\begin{letterthm}\label{thmA}
	Let $M$ be any almost periodic factor such that $M \cong R_\infty \ovt M$. Then $M$ satisfies {\bf HSC}. 
\end{letterthm}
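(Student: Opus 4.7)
The plan is to proceed in three stages, using the McDuff decomposition $M \cong R_\infty \ovt M$ throughout to supply the asymptotic flexibility that the original Haagerup--St\o rmer argument for $R_\infty$ extracts from its ITPFI structure.

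\emph{Stage 1 (Reduction to a state-preserving automorphism).} Fix an almost periodic faithful normal state $\varphi$ on $M$; using the hypothesis $M \cong R_\infty \ovt M$, I would take $\varphi = \varphi_0 \otimes \varphi_1$, where $\varphi_0$ is almost periodic on $R_\infty$ with $\Sd(\varphi_0) \supseteq \Sd(M)$ and $\varphi_1$ is almost periodic on $M$. Applying pointwise innerness of $\Theta$ to $\varphi$ produces $u_0 \in \mathcal{U}(M)$ with $\Theta(\varphi) = u_0 \varphi u_0^*$; replacing $\Theta$ by $\Ad(u_0^*)\circ \Theta$, we may assume $\Theta(\varphi) = \varphi$. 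Then $\Theta$ commutes with $\sigma^\varphi$, preserves the centralizer $N := M^\varphi$ (a ${\rm II_1}$ factor that contains a copy of the hyperfinite factor $R$ coming from the centralizer of $\varphi_0$), and preserves each modular eigenspace $M_\lambda$ for $\lambda \in \Sd(M)$.

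\emph{Stage 2 (Innerness on the centralizer).} For every positive invertible $h \in N$ with $\varphi(h) = 1$, the state $\psi_h := \varphi(h\,\cdot\,)$ has Connes cocycle $[D\psi_h : D\varphi]_t = h^{it}$, and one checks $\Theta(\psi_h) = \psi_{\Theta(h)}$. Pointwise innerness provides $u_h \in \mathcal{U}(M)$ with $\Theta(\psi_h) = u_h \psi_h u_h^*$, which translates into the cocycle identity
\[
\Theta(h)^{it} = u_h\, h^{it}\, \sigma_t^\varphi(u_h^*), \qquad t \in \R.
\]
The McDuff hypothesis $M \cong R_\infty \ovt M$ makes the asymptotic centralizer of $\varphi$ inside $N$ very large (it absorbs $R^\omega$), and I would exploit this via a Rokhlin/ultrapower argument to align the family $\{u_h\}$ into a single unitary $w \in N$ satisfying $\Theta(h) = w h w^*$ for all $h \in N$; that is, $\Theta|_N = \Ad(w)$. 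After composing with $\Ad(w^*)$, we may further assume $\Theta|_N = \id$.

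\emph{Stage 3 (Eigenspace action).} With $\Theta|_N = \id$ and $\Theta$ commuting with $\sigma^\varphi$, the restriction of $\Theta$ to each eigenspace $M_\lambda$ is an $N$-$N$-bimodular, $\ast$-preserving map that is multiplicative across the $\Sd(M)$-grading. Combining the bimodule structure contributed by the $R_\infty$-tensor factor (whose eigenspaces are, up to amplification, one-dimensional over its centralizer) with a final invocation of pointwise innerness, this action must be multiplication by a character $c \colon \Sd(M) \to \T$, and pointwise innerness together with analyticity of the Connes cocycles forces $c(\lambda) = \lambda^{it}$ for a single $t \in \R$. This yields $\Theta = \sigma_t^\varphi$.

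\emph{Main obstacle.} The crux of the argument is Stage 2: the witness unitaries $u_h$ a priori live in $M$, not in $N$, and depend only measurably (not continuously) on $h$. Producing a single element $w \in N$ out of this family requires carefully leveraging the tensorial copy of $R_\infty$ inside $M$ to correct the discrepancies $u_h u_{h'}^*$; this is where the McDuff hypothesis is essential, and the technical heart of the proof presumably lies in making this alignment work globally rather than only in an ultrapower.
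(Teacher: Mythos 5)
The overall architecture of your Stages 1--2 does match the paper's strategy (reduce to $\Theta(\varphi)=\varphi$ for an \emph{extremal} almost periodic state, then show $\Theta$ is inner on the centralizer), but the paper proves Stage 2 not by a Rokhlin/ultrapower alignment of the witness unitaries $u_h$ but by Popa's theorem on singular masas applied inside $M_\varphi$, combined with an eigenvector decomposition of a single intertwining partial isometry (Proposition \ref{HS-prop}); notably, that argument uses no McDuff hypothesis whatsoever and works for any extremal almost periodic weight. So the step you identify as the ``main obstacle'' is a genuine gap in your write-up, but it is fillable by known technology and is not where the difficulty of {\bf HSC} actually sits.

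The fatal gap is in Stage 3. Once $\Theta$ is inner on $M_\varphi$ and commutes with $\sigma^\varphi$, it is indeed of the form $\Ad(u)\circ\sigma^{\Lambda,\varphi}_g$ for some character $g\in\widehat{\Lambda}$ of the \emph{discrete} group $\Lambda=\sigma_p(\Delta_\varphi)$ (this is Theorem \ref{thm-PI}). But nothing in ``analyticity of the Connes cocycles'' forces $g$ to lie in the image $\widehat{\beta}(\R)\subset\widehat{\Lambda}$: every character of the discrete group $\Lambda$ yields an automorphism that scales the eigenspaces consistently, fixes the centralizer pointwise up to inner, and commutes with the modular flow, and for type ${\rm III}_\lambda$ factors with $0\le\lambda<1$ Haagerup--St\o rmer proved that all such \emph{extended} modular automorphisms genuinely are pointwise inner. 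Ruling out $g\notin\widehat{\beta}(\R)$ is precisely the content of {\bf HSC}, and it is the \emph{only} place where the hypothesis $M\cong R_\infty\ovt M$ is used in the paper: one restricts $\Theta$ to the tensor factor $N\cong R_\infty$, shows via the intertwining-stability machinery (Propositions \ref{prop-stable} and \ref{prop-stability-tensor}) that the restriction remains locally pointwise inner --- a property that does \emph{not} obviously pass to subalgebras, which is why this machinery had to be built --- and then invokes the Kawahigashi--Sutherland--Takesaki classification of $\Aut(R_\infty)$ together with Lemma \ref{lem-injective-modular} to conclude $g=t\in\R$. Your proposal contains no substitute for this step, and without one the argument only recovers the (known) classification up to extended modular automorphisms, not the conjecture itself.
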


Combining Theorem \ref{thmA} with the unique McDuff decomposition \cite[Theorem E]{HMV16}, the family $(R_\infty \ovt T_\Lambda)_{\Lambda}$ provides an explicit continuum of pairwise nonisomorphic McDuff type ${\rm III}_1$ factors satisfying {\bf HSC}.

The factors in Theorem \ref{thmA} are all McDuff factors hence non full. In Theorem \ref{thm-HS-free}, we prove that many almost periodic free product factors satisfy {\bf HSC}. Combining Theorem \ref{thm-HS-free} with \cite[Theorem B]{HN18}, our second main theorem shows that a large class of almost periodic full type ${\rm III_1}$ factors satisfy {\bf HSC}.

\begin{letterthm}\label{thmB}
	All almost periodic free Araki--Woods type ${\rm III_1}$ factors $(T_\Lambda)_{\Lambda}$ satisfy {\bf HSC}.
\end{letterthm}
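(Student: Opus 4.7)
The plan is to reduce Theorem~\ref{thmB} to the free product result Theorem~\ref{thm-HS-free} via the structural decomposition provided by \cite[Theorem B]{HN18}. The idea is that every almost periodic free Araki--Woods factor $T_\Lambda$ admits a canonical free product decomposition compatible with its free quasi-free state $\varphi_\Lambda$, and that this places $T_\Lambda$ within the scope of the general free product theorem.

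Concretely, recall $T_\Lambda = \Gamma(H_\R, U_t)\dpr$ where $U_t$ is an almost periodic orthogonal representation of $\R$ on $H_\R$ whose point spectrum generates $\Lambda$. Splitting $H_\R = H_1 \oplus H_2$ into two non-zero closed $U_t$-invariant subspaces yields a free product decomposition $(T_\Lambda, \varphi_\Lambda) = (T_1, \varphi_1) * (T_2, \varphi_2)$, where each $(T_i, \varphi_i)$ is again an almost periodic free Araki--Woods factor (or a matrix algebra with an almost periodic state in the degenerate case), and $\varphi_\Lambda$ restricts to the free product state, still almost periodic with eigenvalue group $\Lambda$. I would invoke \cite[Theorem B]{HN18} to single out a decomposition whose free components have the structural features required by Theorem~\ref{thm-HS-free}, namely sufficient diffuseness of the centralizers $T_i^{\varphi_i}$ and a rich enough point spectrum on each side.

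Once such a decomposition is in hand, Theorem~\ref{thm-HS-free} directly produces \textbf{HSC} for $T_\Lambda$. The verification of its hypotheses reduces to standard computations for free Araki--Woods factors: the almost periodicity of $U_t$ furnishes an abundance of eigenoperators for $\sigma^{\varphi_\Lambda}$, and freeness yields the required diffuseness and fullness properties of the centralizers. The pointwise innerness assumption on $\Theta \in \Aut(T_\Lambda)$ is then translated through the free product structure into a statement about how $\Theta$ acts on each free factor modulo modular and inner perturbations, and Theorem~\ref{thm-HS-free} packages the resulting conjugation data into the desired identity $\Theta = \Ad(u) \circ \sigma_t^{\varphi_\Lambda}$.

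The principal obstacle is arranging that the free product pieces $(T_i, \varphi_i)$ satisfy precisely the technical hypotheses of Theorem~\ref{thm-HS-free}. Because $T_\Lambda$ is a full factor, one cannot appeal to central sequence or $R_\infty$-absorption techniques as in Theorem~\ref{thmA}; instead, the argument depends on delicate deformation/rigidity input and on detailed control of the modular centralizer along each free product side, which is precisely the content of \cite[Theorem B]{HN18}. Assuming that input is tailored to the needs of Theorem~\ref{thm-HS-free}, the deduction of Theorem~\ref{thmB} is essentially immediate.
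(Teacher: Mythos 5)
Your top-level reduction is the same as the paper's: Theorem~\ref{thmB} is deduced by combining Theorem~\ref{thm-HS-free} with \cite[Theorem B]{HN18}. However, the way you propose to verify the hypotheses of Theorem~\ref{thm-HS-free} contains a genuine gap. That theorem does not ask for ``sufficient diffuseness of the centralizers'' or ``a rich enough point spectrum on each side''; it requires, very specifically, that one of the free legs be the \emph{amenable} type ${\rm III}_1$ factor $N=R_\infty$ carrying an extremal almost periodic state $\varphi_N$ with $\sigma_p(\Delta_{\varphi_P})\subset\sigma_p(\Delta_{\varphi_N})$. This is not a technical convenience: the entire proof of Theorem~\ref{thm-HS-conjecture} (hence of Theorem~\ref{thm-HS-free}) hinges on Lemma~\ref{lem-outer-conjugacy}, i.e.\ the Kawahigashi--Sutherland--Takesaki classification of automorphisms of the injective ${\rm III}_1$ factor up to outer conjugacy, applied to $\Theta|_N$. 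No such classification is available for a nonamenable free leg, so amenability of $N$ cannot be traded for centralizer properties.

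This is exactly where your proposed decomposition fails. Splitting $H_\R=H_1\oplus H_2$ into invariant subspaces yields $(T_\Lambda,\varphi_\Lambda)\cong(\Gamma(H_1)'',\varphi_1)\ast(\Gamma(H_2)'',\varphi_2)$ with \emph{free Araki--Woods} components, which are full (hence nonamenable) whenever they are of type ${\rm III}$, and the remaining degenerate pieces ($L(\mathbf{Z})$ or matrix algebras) are not $R_\infty$ either. So no Fock-space splitting puts $T_\Lambda$ in the scope of Theorem~\ref{thm-HS-free}. The actual role of \cite[Theorem B]{HN18} is to supply the nonobvious state-preserving isomorphism $T_\Lambda\cong(R_\infty,\varphi_N)\ast(P,\varphi_P)$ with $\varphi_N$ extremal almost periodic and $\sigma_p(\Delta_{\varphi_N})=\Lambda$ --- i.e.\ to manufacture an amenable free leg that is invisible in the Fock-space model. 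Once that identification is cited correctly, the deduction is indeed immediate; without it, your argument does not get off the ground. (Your closing remark about deformation/rigidity input is also off target: what is needed from \cite{HN18} is this structural isomorphism, not rigidity of the pieces.)
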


In particular, the family $(T_\Lambda)_{\Lambda}$ provides an explicit continuum of pairwise nonisomorphic full type ${\rm III}_1$ factors satisfying {\bf HSC}.

We introduce the notion of {\bf locally pointwise inner} automorphism with respect to a normal state (see Definition \ref{def-pointinner}). Locally pointwise inner automorphisms share similar properties with pointwise inner automorphisms but they are more flexible. 

Firstly, generalizing the methods developed in \cite{HS91}, we obtain a useful characterization of locally pointwise inner automorphisms with respect to almost periodic states (see Theorem \ref{thm-PI}). Secondly, we prove that in certain situations (e.g.\ tensor products or free products), locally pointwise inner automorphisms remain locally pointwise inner under restriction to subalgebras. This feature is new compared to \cite{HS91}. For this, we exploit the intertwining theory of normal states on von Neumann algebras introduced in \cite{HSV16} (see Theorem \ref{thm-intertwining} and Propositions \ref{prop-stability-tensor} and \ref{prop-stability-free}). The proof of Theorems \ref{thmA} and \ref{thmB} follows by combining our results together with the classification of automorphisms of $R_\infty$ as in \cite{HS91} (see Theorem \ref{thm-HS-conjecture} for a general statement).

{ \hypersetup{linkcolor=black} \tableofcontents}

\section{Preliminaries}

For every von Neumann algebra $M$, we denote by $(M, \rL^2(M), J, \rL^2(M)^+)$ its standard form \cite{Ha73}. For every $\varphi \in (M_\ast)^+$, we denote by $\xi_\varphi \in \rL^2(M)^+$ the unique vector such that $\varphi(x) = \langle x\xi_\varphi, \xi_\varphi \rangle$ for every $x \in M$. For every $\Theta \in \Aut(M)$ and every $\varphi \in M_\ast$, we simply write $\Theta(\varphi) = \varphi \circ \Theta^{-1}$. The predual $M_\ast$ is naturally endowed with the following $M$-$M$-bimodule structure
$$\forall a, b, x \in M, \quad (a\varphi b)(x) = \varphi(b x a).$$

\subsection{Almost periodic factors}

	Let $\Lambda < \R_{>0}$ be any countable subgroup. By regarding $\Lambda$ as a discrete group, we may consider its dual compact group $G=\widehat{\Lambda}$. The inclusion $\beta \colon \Lambda \to \R^*_+$ is continuous so that the dual map 
	$$\widehat{\beta}\colon \R \to G \colon t \mapsto  ( \lambda \mapsto \lambda^{{\rm i} t})$$
is continuous as well. Then $\widehat{\beta}$ has a dense image. When $\Lambda < \R_{>0}$ is cyclic, $G\cong \mathrm{\mathbf T}$ and $\widehat{\beta}$ is surjective. When $\Lambda < \R_{>0}$ is dense, $\widehat{\beta}$ is injective.

Let $M$ be any von Neumann algebra with a faithful normal semifinite weight $\varphi$. Denote by $\sigma_p(\Delta_\varphi)$ the point spectrum of the modular operator $\Delta_\varphi$. For every $\lambda \in \sigma_p(\Delta_\varphi)$, denote by 
 $$M_\varphi(\lambda) = \left \{ x \in M \mid \forall t \in \R, \quad \sigma_t^\varphi(x) = \lambda^{{\rm i} t} x\right \}$$
the subspace of $\lambda$-eigenvectors for $\varphi$. By definition, the centralizer $M_\varphi$ coincides with the subspace $M_\varphi(1)$. We say that $\varphi$ is {\bf almost periodic} \cite{Co74} if the linear span of $M_\varphi(\lambda)$ with $\lambda \in \sigma_p(\Delta_\varphi)$ is $\ast$-strongly dense in $M$. Since $M$ has separable predual, $\sigma_p(\Delta_\varphi)$ is countable. Note that $\varphi$ is semifinite on $M_\varphi$ and $M_\varphi\subset M$ is with faithful normal expectation.
For a countable subgroup $\Lambda < \R_{>0}$, $\varphi$ is called $\Lambda$-{\bf almost periodic} if $\varphi$ is almost periodic and $\sigma_p(\Delta_\varphi)\subset \Lambda$.

We summarize known facts for von Neumann algebras $M$ with almost periodic weights. We refer to \cite{Co74,Dy94} for more details.
\begin{itemize}
	\item A faithful normal semifinite weight $\varphi$ on $M$ is $\Lambda$-almost periodic if and only if the modular action $\sigma^\varphi : \R \curvearrowright M$ extends uniquely continuously to $\sigma^{\Lambda,\varphi}\colon G\curvearrowright  M$ via $\widehat{\beta}$. We fix such $\varphi$.

	\item The crossed product $\rd_{\Lambda,\varphi}(M) =M\rtimes_{\sigma^{\Lambda,\varphi}} G$ is called the {\bf discrete core} with respect to $\Lambda$ and $\varphi$. It admits a canonical trace $\Tr$ satisfying $\Tr\circ \widehat{\sigma}^{\Lambda, \varphi}_\lambda = \lambda \Tr$ for every $\lambda \in \Lambda$, where $\widehat{\sigma}^{\Lambda, \varphi} \colon \Lambda \curvearrowright \rd_{\Lambda,\varphi}(M)$ is the dual action of $\sigma^{\Lambda,\varphi} : G \curvearrowright M$. 

	\item The Takesaki duality shows that 
	$$ \rd_{\Lambda,\varphi}(M)\rtimes_{\widehat{\sigma}^{\Lambda, \varphi}} \Lambda \cong M\ovt \rB(\ell^2(\Lambda)).$$
Since $\Lambda$ is discrete and since the dual action is properly outer (because it is trace-scaling), 
$\rd_{\Lambda, \varphi}(M)\subset M\ovt \rB(\ell^2(\Lambda))$ is a semifinite and irreducible von Neumann subalgebra with faithful normal expectation. In this picture, $\rd_{\Lambda,\varphi}(M)$ coincides with $(M\ovt \rB(\ell^2(\Lambda)))_{\varphi\otimes \Tr(h \, \cdot )}$, where $h \in \rB(\ell^2(\Lambda))$ is the nonsingular operator defined by $h\delta_\lambda = \lambda \delta_\lambda$ for every $\lambda\in \Lambda$. 

	\item For $\lambda\in \Lambda$, let $e_\lambda\in \rB(\ell^2(\Lambda))$ be the minimal projection corresponding to $\delta_\lambda$. By applying the compression map by $1\otimes e_\lambda$, the irreducibility of $\rd_{\Lambda, \varphi}(M)\subset M \ovt \rB(\ell^2(\Lambda))$ implies that $M_\varphi \subset M$ is irreducible. In particular, for any almost periodic weight $\varphi$, we have $M_\varphi'\cap M \subset M_\varphi$. 

	\item Assume that $\sigma_p(\Delta_\varphi)$ generates $\Lambda$. We say that $\varphi$ is {\bf extremal} if $M_\varphi$ is a factor. Then $\varphi$ is extremal if and only if $\rd_{\Lambda,\varphi}(M)$ is a factor. In this case, $\sigma_p(\Delta_\varphi)$ coincides with $\Lambda$. 
\end{itemize}

Recall that any type ${\rm III}$ factor that possesses an extremal almost periodic faithful normal weight is necessarily of type ${\rm III}_\lambda$ with $0 < \lambda \leq 1$ (see \cite[Corollaire 3.2.7]{Co72}). In this case, the centralizer is an irreducible type II subfactor.

\begin{lem}\label{lem-injective-modular}
	Let $M$ be any factor with an extremal almost periodic faithful normal weight $\varphi$. Set $\Lambda = \sigma_p(\Delta_\varphi)$ and $G =\widehat{\Lambda}$. Then for every $g\in G$, 
	$$ \sigma^{\Lambda,\varphi}_g \in \Inn(M) \quad \Leftrightarrow \quad g= 0.$$
\end{lem}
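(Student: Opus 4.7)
The $(\Leftarrow)$ direction is immediate since $\sigma^{\Lambda,\varphi}_0 = \id_M$, so the content lies in the forward implication. The plan is to show that any unitary $u \in M$ implementing $\sigma^{\Lambda,\varphi}_g$ must in fact be a scalar, and then to deduce $g = 0$ from the faithfulness of the $G$-action on the eigenspaces $M_\varphi(\lambda)$.

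Suppose $\sigma^{\Lambda,\varphi}_g = \Ad(u)$ for some $u \in \cU(M)$. The first ingredient I will use is the observation that the extended modular action scales $M_\varphi(\lambda)$ by the character value $g(\lambda) \in \T$, for each $\lambda \in \Lambda = \sigma_p(\Delta_\varphi)$; this follows from the defining identity $\sigma_t^\varphi(x) = \lambda^{{\rm i}t} x$ on $M_\varphi(\lambda)$ together with the continuity of $\sigma^{\Lambda,\varphi}$ and the density of $\widehat{\beta}(\R)$ in $G$. Taking $\lambda = 1$, I get that $\sigma^{\Lambda,\varphi}_g$ fixes $M_\varphi$ pointwise, so $u \in M_\varphi' \cap M$. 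I then invoke the two structural facts collected in the preliminaries: the irreducibility $M_\varphi' \cap M \subset M_\varphi$ places $u$ inside $M_\varphi$, and extremality of $\varphi$ makes $M_\varphi$ a factor, forcing $u \in Z(M_\varphi) = \C \cdot 1$. Hence $\sigma^{\Lambda,\varphi}_g = \Ad(u) = \id_M$.

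To conclude, I use once more that extremality gives $\sigma_p(\Delta_\varphi) = \Lambda$. For every $\lambda \in \Lambda$ there is then a nonzero $x \in M_\varphi(\lambda)$, and the equality $g(\lambda) x = \sigma^{\Lambda,\varphi}_g(x) = x$ forces $g(\lambda) = 1$. Since this holds for every $\lambda \in \Lambda$, we conclude that $g = 0$ in $G = \widehat{\Lambda}$. The main conceptual step, and the only one requiring a genuine argument beyond quoting the preliminaries, is the centralizer/scalarity claim in the second paragraph; once that is in place, the rest reduces to unwinding the definition of $g$ as a character on $\Lambda$. I do not anticipate any essential obstacle.
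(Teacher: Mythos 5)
Your proof is correct, but it takes a genuinely different route from the paper's. The paper passes to the discrete core $\rd_{\Lambda,\varphi}(M)=M\rtimes_{\sigma^{\Lambda,\varphi}}G$, which is a factor precisely because $\varphi$ is extremal: writing $\sigma_g^{\Lambda,\varphi}=\Ad(u)$, it notes $u\in\mathcal U(M_\varphi)$ (since $\Ad(u)$ preserves $\varphi$), observes that $u^*\lambda_g$ commutes with both $\rL(G)$ and $M$, hence lies in $\mathcal Z(\rd_{\Lambda,\varphi}(M))=\C1$, and concludes that $u$ and $\lambda_g$ are scalars, whence $g=0$. You instead stay entirely inside $M$: the eigenspace grading shows $\sigma_g^{\Lambda,\varphi}$ acts on $M_\varphi(\lambda)$ by $\langle g,\lambda\rangle$, so it fixes $M_\varphi$ pointwise, placing $u$ in $M_\varphi'\cap M\subset M_\varphi$ and hence in $\mathcal Z(M_\varphi)=\C1$ by extremality; then $\sigma_g^{\Lambda,\varphi}=\id$ and nonzero eigenvectors force $\langle g,\lambda\rangle=1$ for all $\lambda$. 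Both arguments use extremality in an essential way, just through the two equivalent guises recorded in the preliminaries (factoriality of the discrete core versus factoriality of $M_\varphi$ plus $M_\varphi'\cap M\subset M_\varphi$). Your approach is more elementary in that it avoids the crossed product, but it leans on one extra fact the paper's proof does not need: that every $\lambda\in\Lambda$ admits a \emph{nonzero} eigenvector in $M$, i.e.\ $M_\varphi(\lambda)\neq\{0\}$. This does hold here — extremality gives $\sigma_p(\Delta_\varphi)=\Lambda$, and for an almost periodic weight the point spectrum of $\Delta_\varphi$ is exactly the set of $\lambda$ with $M_\varphi(\lambda)\neq\{0\}$ (a fact the paper itself uses elsewhere, e.g.\ in Lemma \ref{lem-amenable-eigenvalue}) — but it deserves an explicit sentence in your write-up.
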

\begin{proof}
	By assumption, the discrete core $\rd_{\Lambda,\varphi}(M)=M\rtimes_{\sigma^{\Lambda,\varphi}} G$ is a factor. Suppose that $\sigma_g^{\Lambda,\varphi}=\Ad(u)$ for some $u\in \mathcal U(M)$. Since  $\sigma_g^{\Lambda,\varphi}(\varphi) = \varphi$, we have $u\in \mathcal U(M_\varphi)$. Since $G$ is commutative, $u^*\lambda_g \in \rL(G)' \cap \rd_{\Lambda, \varphi}(M)$. Since $\Ad(u^*\lambda_g)$ is trivial on $M$, $u^*\lambda_g \in M' \cap \rd_{\Lambda, \varphi}(M)$. Thus, $u^*\lambda_g \in \mathcal Z(\rd_{\Lambda,\varphi}(M)) = \C 1$. It follows that $u\in \C 1$ and $\lambda_g \in \C 1$. This further implies that $\lambda_g = 1$ and so $g=0$.
\end{proof}

\subsection{Intertwining theory of normal positive linear functionals}

Let $M$ be any von Neumann algebra and $\varphi \in (M_\ast)^+$ any normal positive linear functional. Denote by $s(\varphi) \in M$ the support projection of $\varphi$ in $M$. Then $\varphi  = \varphi s(\varphi) = s(\varphi) \varphi$ is a faithful normal positive linear functional on $s(\varphi) M s(\varphi)$. We then simply denote by $M_\varphi \subset s(\varphi) M s(\varphi)$ the centralizer of $\varphi$ on $s(\varphi) M s(\varphi)$. We denote by $\mathcal E_{M, \varphi}$ the nonempty set of all faithful normal positive linear functionals $\Phi \in (M_\ast)^+$ such that $s(\varphi) \in M_{\Phi}$ and $\Phi s(\varphi) =  s(\varphi) \Phi = \varphi$. 

Let $\varphi, \psi \in (M_\ast)^+$, $\Phi \in \mathcal E_{M, \varphi}$ and $\Psi \in \mathcal E_{M, \psi}$. For every $t \in \R$, denote by $w_t = [\rD \Psi : \rD \Phi]_t \in \mathcal U(M)$ Connes' Radon--Nikodym cocycle. Following \cite[Theorem VIII.3.19]{Ta03}, define $[\rD \psi : \rD \Phi]_t = s(\psi) w_t = w_t \sigma_t^{\Phi}(s(\psi)) \in s(\psi) M \sigma_t^\Phi(s(\psi))$ for every $t \in \R$.

Generalizing \cite{HSV16}, we introduce the following terminology regarding the intertwining property of normal positive linear functionals.

\begin{df}
Let $M$ be any von Neumann algebra and $\varphi, \psi \in (M_\ast)^+$ any nonzero positive normal linear functionals. We say that $\psi$ and $\varphi$ are {\bf intertwined in}  $M$ and write $\psi \sim_M \varphi$ if there exist a nonzero partial isometry $v \in s(\psi)Ms(\varphi)$ and $\lambda > 0$ such that $ \psi v = \lambda  v \varphi$. 
\end{df}

If $ \psi v = \lambda  v \varphi$, then we have $v^*v \in M_\varphi$ and $vv^* \in M_\psi$. Since the intertwining property is a symmetric relation, we prefer to use the notation $\psi \sim_M \varphi$ instead of $\psi \preceq_M \varphi$ as in \cite{HSV16}. Observe that if $\psi \sim_M \varphi$, then $u\psi u^* \sim_M \varphi$ for every $u \in \mathcal U(M)$.

The next theorem provides an analytic characterization of the intertwining property. It is a generalization of \cite[Corollary 3.2]{HSV16} to non faithful states (see also \cite[Definition 3.7]{Is19} for a generalization to conditional expectations). 

\begin{thm}\label{thm-intertwining}
Let $M$ be any von Neumann algebra and $\varphi, \psi \in (M_\ast)^+$ any nonzero normal positive linear functionals. Let $\Phi \in \mathcal E_{M, \varphi}$, $\Psi \in \mathcal E_{M, \psi}$ and set $w_t = [\rD \Psi : \rD \Phi]_t \in \mathcal U(M)$ for every $t \in \R$. Let $\mathcal S \subset M$ be any subset that is strongly dense in $M$. The following assertions are equivalent:
\begin{enumerate}[\rm (i)]
\item $\psi \sim_M \varphi$.
\item There exists no net $(t_j)_{j \in J}$ in $\R$ such that we have
$$\forall a, b \in \mathcal S, \quad \lim_{j \to \infty} \varphi(\sigma_{t_j}^{\Phi}(b^*) w_{t_j}^* s(\psi) a) = 0.$$
\item There exists no net $(t_j)_{j \in J}$ in $\R$ such that we have
$$\forall a, b \in M, \quad \lim_{j \to \infty} \varphi(\sigma_{t_j}^{\Phi}(b^*) w_{t_j}^* s(\psi) a) = 0.$$
\end{enumerate}
\end{thm}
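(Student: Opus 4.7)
The plan is to follow the strategy of [HSV16, Corollary 3.2] (refined in [Is19, Definition 3.7]), systematically using the faithful extensions $\Phi \in \mathcal E_{M,\varphi}$ and $\Psi \in \mathcal E_{M,\psi}$ together with Connes' cocycle $w_t$. The implication (iii) $\Rightarrow$ (ii) is immediate; (ii) $\Rightarrow$ (iii) follows from Kaplansky density, since the bilinear forms $(a, b) \mapsto \varphi(\sigma_t^\Phi(b^*) w_t^* s(\psi) a)$ are uniformly bounded and jointly strongly continuous on norm-bounded subsets of $M \times M$, uniformly in $t \in \R$ (using $\|w_t^* s(\psi)\| \le 1$ and the fact that $\sigma_t^\Phi$ preserves $\|\cdot\|_\Phi^\#$).

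For (i) $\Rightarrow$ (iii), assume $v \in s(\psi) M s(\varphi)$ is a nonzero partial isometry with $\psi v = \lambda v \varphi$. Passing to the amplification $\widetilde M = M \ovt M_2(\C)$ with diagonal state $\widetilde\Phi(x_{ij}) = \Phi(x_{11}) + \Psi(x_{22})$, the identity $\sigma_t^{\widetilde\Phi}(1 \otimes e_{21}) = w_t \otimes e_{21}$ shows that $v \otimes e_{21}$ is a nonzero eigenvector of $\sigma^{\widetilde\Phi}$ with eigenvalue $\lambda$, equivalent to $w_t \sigma_t^\Phi(v) = \lambda^{\mathrm{i}t} v$. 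Taking adjoints and using $s(\psi) v = v$ yields
\[
\varphi(\sigma_t^\Phi(v^*)\, w_t^* s(\psi) v) = \lambda^{-\mathrm{i}t}\, \varphi(v^* v),
\]
of constant modulus $\varphi(v^*v) > 0$; approximating $v$ strongly by elements of $\mathcal S$ furnishes $a, b \in \mathcal S$ that obstruct any net as in (ii).

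The main obstacle is (iii) $\Rightarrow$ (i). The plan is to produce a nonzero intertwining partial isometry as a weak limit along the orbit $\mathcal O = \{w_t^* s(\psi) : t \in \R\}$. The cocycle identity $w_{s+t} = w_s \sigma_s^\Phi(w_t)$ together with $\sigma_s^\Phi(s(\psi)) = w_s^* s(\psi) w_s$ (the latter from $s(\psi) \in M_\Psi$ and $\sigma_s^\Psi = w_s \sigma_s^\Phi(\cdot) w_s^*$) imply that $\alpha_s(x) := \sigma_s^\Phi(x) w_s^*$ defines a $\sigma$-weakly continuous $\R$-action on $M$ whose orbit through $s(\psi)$ is exactly $\mathcal O$. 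Let $\mathcal K \subset M$ be the $\sigma$-weakly closed convex hull of $\mathcal O$ inside the unit ball of $M$; it is $\alpha$-invariant and $\sigma$-weakly compact. Hypothesis (iii), reformulated via $\Phi \circ \sigma_t^\Phi = \Phi$ and $s(\varphi) \in M_\Phi$, translates into the statement $0 \notin \mathcal K$ by a Hahn--Banach separation matched to the test functionals appearing in (iii). Averaging $\alpha$ against an invariant mean on the amenable group $\R$ (Day's fixed-point theorem) then produces a nonzero $\alpha$-fixed element $u \in \mathcal K$ satisfying $\sigma_s^\Phi(u) = u w_s$. Polar decomposition $u = v|u|$ yields $u u^* \in M_\Phi$ and $u^* u \in M_\Psi$; spectral analysis of $|u|$ against the modular automorphism (in the $2 \times 2$ picture above) then extracts from a nonzero spectral atom at some $\lambda > 0$ a partial isometry $v_\lambda \in s(\psi) M s(\varphi)$ with $w_t \sigma_t^\Phi(v_\lambda) = \lambda^{\mathrm{i}t} v_\lambda$, which by reversing the earlier computation is equivalent to $\psi v_\lambda = \lambda v_\lambda \varphi$. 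The most delicate step is extracting a partial isometry with a definite eigenvalue from the spectral data of $|u|$; the hypotheses $s(\varphi) \in M_\Phi$ and $s(\psi) \in M_\Psi$ are essential throughout for the support bookkeeping that keeps all operations within the correct corners of $M$.
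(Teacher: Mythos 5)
Your implications $(\mathrm{i}) \Rightarrow (\mathrm{iii})$ and $(\mathrm{ii}) \Leftrightarrow (\mathrm{iii})$ are essentially correct and close in spirit to the paper's proof: the paper packages the equivalence of $(\mathrm{ii})$ and $(\mathrm{iii})$ by observing that both are equivalent to the non-weak-mixing of a single unitary representation on $\mathcal H = s(\psi)Js(\varphi)J\rL^2(M)$, and your continuity estimate is the same mechanism. (One caveat: Kaplansky density is not literally available for an arbitrary strongly dense \emph{subset} $\mathcal S$, but the bound $|\varphi(\sigma_t^\Phi(b^*)w_t^*s(\psi)a)| \leq \|a\xi_\varphi\|\,\|b\xi_\varphi\|$, uniform in $t$, makes it unnecessary.)

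The genuine gap is in $(\mathrm{iii}) \Rightarrow (\mathrm{i})$, and it is twofold. First, condition $(\mathrm{iii})$ does \emph{not} translate into $0 \notin \mathcal K$: the quantities in $(\mathrm{iii})$ are the $t$-dependent pairings $\langle w_t^*s(\psi)a\xi_\varphi, \sigma_t^\Phi(b)\xi_\varphi\rangle$, not evaluations of a fixed family of normal functionals at $w_t^*s(\psi)$, so Hahn--Banach separation does not apply. Concretely, take $\varphi = \Phi$ faithful and $\psi = \Psi = \lambda^{-1}\varphi$ with $\lambda \neq 1$; then $w_t = \lambda^{-{\rm i}t}1$, the orbit $\{w_t^* s(\psi)\} = \{\lambda^{{\rm i}t}1\}$ has $\sigma$-weakly closed convex hull equal to the closed unit disc times $1$, which contains $0$, and the only $\alpha$-fixed point in $\mathcal K$ is $0$ --- yet $\psi \sim_M \varphi$ via $v = 1$, so $(\mathrm{i})$ and hence $(\mathrm{iii})$ hold. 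Second, and relatedly, averaging against an invariant mean can only produce $\alpha$-fixed elements, which after polar decomposition yield intertwiners with $\lambda = 1$; the theorem must produce an intertwiner with \emph{some} $\lambda > 0$ depending on $\psi$ and $\varphi$, and your scheme has no mechanism for locating that eigenvalue (the subsequent ``spectral analysis of $|u|$'' cannot recover it, since a fixed point already corresponds to eigenvalue $1$, and in the example above $u = 0$). The paper's route avoids both problems: non-weak-mixing of $\rho_t(s(\psi)a\xi_\varphi) = s(\psi)w_t\sigma_t^\Phi(a)\xi_\varphi$ over the abelian group $\R$ first produces an eigenvector $\eta$ together with its eigenvalue $\exp({\rm i}st)$; one then transfers $\eta$ into $M$ not by averaging but by taking the nearest-point projection of $\eta$ onto the $\rho$-invariant, circled, weakly compact convex set $\Ball(s(\psi)Ms(\varphi))\xi_\varphi$, obtaining $x \neq 0$ with $s(\psi)w_t\sigma_t^\Phi(x) = \exp({\rm i}st)x$, after which polar decomposition and the uniqueness theorem for Connes cocycles give $\psi v = \exp(s)\, v\varphi$.
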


\begin{proof}
$(\rm i)  \Rightarrow (\rm iii)$ Let $v \in s(\psi) M s(\varphi)$ be any nonzero partial isometry and $\lambda > 0$ such that $\psi v = \lambda v \varphi$. Then $\psi vv^* = \lambda v \varphi v^* = \lambda  v \Phi v^*$. The chain rule (see \cite[Theorem VIII.3.7]{Ta03}) implies that 
\begin{align*}
\forall t \in \R, \quad vv^* w_t&= [\rD \, (\psi vv^*) : \rD\Phi]_t
	= [\rD \, (\lambda  v \Phi v^*) : \rD\Phi]_t\\
	&=\lambda^{{\rm i} t}  [\rD \, (  v \Phi v^*) : \rD\Phi]_t = \lambda^{{\rm i} t}   v \sigma_t^\Phi( v^*) .
\end{align*}
This implies that for every $t \in \R$, we have $ \lambda^{{\rm i}t} \sigma_t^\Phi(v^*) w_t^* v = v^*v$ and so 
$$|\varphi(\sigma_t^\Phi(v^*) w_t^* s(\psi) v)| = \varphi(v^*v) \neq 0.$$ 

$(\rm ii) \Leftrightarrow (\rm iii)$ Set $\mathcal H = s(\psi) J s(\varphi) J\rL^2(M)$ and observe that $s(\psi)\mathcal Ss(\varphi) \xi_\varphi$ is dense in $\mathcal H$. We may then define the strongly continuous unitary representation $\rho : \R \to \mathcal U(\mathcal H)$ by the formula $\rho_t(s(\psi)a \xi_\varphi) = s(\psi) w_t \sigma_t^\Phi(a) \xi_\varphi$ for every $a \in \mathcal S$. Then the assertions $(\rm ii)$ and $(\rm iii)$ are both equivalent to the fact that $\rho : \R \to \mathcal U(\mathcal H)$ is not weakly mixing.

 $(\rm iii) \Rightarrow (\rm i)$ Since $\rho : \R \to \mathcal U(\mathcal H)$ is not weakly mixing and since $\R$ is abelian, there exist $s \in \R$ and a unit vector $\eta \in \mathcal H$ such that $\rho_t(\eta) = \exp({\rm i}st) \, \eta$ for every $t \in \R$. 
Set $\mathcal C\coloneqq \Ball(s(\psi)Ms(\varphi))\xi_\varphi \subset \mathcal H$ and observe that it is a closed convex subset (because $\Ball(s(\psi)Ms(\varphi))$ is $\sigma$-weakly compact). Take the unique element $x \in \Ball(s(\psi)Ms(\varphi))$ such that $x\xi_\varphi$ is the orthogonal projection of $\eta$ onto $\mathcal C$. Since $s(\psi)Ms(\varphi)\xi_\varphi$ is dense in $\mathcal H$ and $\eta \neq 0$, we necessarily have $x \neq 0$. For every $t \in \R$, we have that $\rho_t(x\xi_\varphi)$ is the orthogonal projection of $\rho_t(\eta) = \exp({\rm i}st) \, \eta$ onto the closed convex subset $\rho_t(\mathcal C) = \mathcal C$ and hence $\rho_t(x\xi_\varphi) = \exp({\rm i}st) \, x\xi_\varphi$. Therefore, we have $s(\psi)w_t \sigma_t^\Phi(x) = \exp({\rm i} st) \, x$ for every $t \in \R$. Write $x = v |x|$ for the polar decomposition of $x \in M$. Then $v \in s(\psi)Ms(\varphi)$ is a nonzero partial isometry such that $s(\psi)w_t \sigma_t^\Phi(v) = \exp({\rm i} st) \, v$ for every $t \in \R$. Observe that $p = v^*v \in M_\varphi$ and $q = vv^* \in M_\psi$.

Define the normal positive linear functional $\rho \in (M_\ast)^+$ by the formula $\rho = \exp(s) \, v\Phi v^*$. Observe that the support of $\rho$ equals $q$. Then for every $t \in \R$, we have $[\rD \rho : \rD \Phi]_t = \exp({\rm i}st) \, v \sigma_t^\Phi(v^*) \in q M \sigma_t^\Phi(q)$. It holds that 
\begin{align*}
\forall t \in \R, \quad [\rD \rho : \rD \Psi]_t &= [\rD \rho : \rD\Phi]_t \, w_t^* \\
&= \exp({\rm i}st) \, v \sigma_t^\Phi(v^*)  w_t^* \\
&= s(\psi)w_t \sigma_t^\Phi (q) w_t^* \\
&= s(\psi)\sigma_t^\Psi(q) =q\\
&= [\rD \Psi q : \rD \Psi]_t.
\end{align*}
Then \cite[Th\'eor\`eme 1.2.4]{Co72} (or more generally \cite[Theorem VIII.3.19]{Ta03}) implies that $\rho = \Psi q = \psi q$. Letting $\lambda = \exp(s)$, we have $\psi q = \rho =  \lambda v \Phi v^*$ and so $\psi v = \lambda v \Phi = \lambda v \varphi$.
\end{proof}

The following lemma provides a sufficient condition for transitivity of the symmetric relation $\sim_M$.

\begin{lem}\label{lem-transitivity}
Let $M$ be any von Neumann algebra and $\varphi \in M_\ast$ any faithful normal state such that $M_\varphi$ is a factor. For all $\psi_1, \psi_2 \in (M_\ast)^+$, if $\psi_1 \sim_M \varphi$ and $\psi_2 \sim_M \varphi$, then $\psi_1 \sim_M \psi_2$.
\end{lem}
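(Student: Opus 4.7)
The plan is to construct the intertwining partial isometry between $\psi_1$ and $\psi_2$ directly, by composing the two given intertwinings through a common ``bridge'' inside the centralizer $M_\varphi$, where the factor hypothesis does all the work.

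First I will unpack the hypotheses. Since $\varphi$ is faithful, $s(\varphi) = 1$, and by the definition of $\sim_M$ there exist nonzero partial isometries $v_i \in s(\psi_i) M$ and scalars $\lambda_i > 0$ with $\psi_i v_i = \lambda_i v_i \varphi$ for $i = 1,2$. As noted right after the definition, the source projections $p_i := v_i^* v_i$ lie in $M_\varphi$ and the range projections $v_i v_i^*$ lie in $M_{\psi_i}$. Next I will invoke factoriality: since $M_\varphi$ is a factor, the nonzero projections $p_1, p_2 \in M_\varphi$ are Murray--von Neumann comparable, so by the symmetry of the statement I may assume $p_2 \preceq p_1$ in $M_\varphi$ and pick a partial isometry $u \in M_\varphi$ with $u^* u = p_2$ and $u u^* \leq p_1$. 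Standard partial isometry arithmetic then gives $p_1 u = u$.

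I will then set $w := v_1 u v_2^* \in s(\psi_1) M s(\psi_2)$ and perform two verifications. On the one hand,
$$ w^* w = v_2\, u^* v_1^* v_1 u \, v_2^* = v_2\, u^* p_1 u\, v_2^* = v_2 u^* u v_2^* = v_2 p_2 v_2^* = v_2 v_2^* \neq 0, $$
so $w$ is a nonzero partial isometry. On the other hand, using $\psi_1 v_1 = \lambda_1 v_1 \varphi$, its adjoint form $v_2^* \psi_2 = \lambda_2 \varphi v_2^*$, and the fact that $u \in M_\varphi$ commutes with $\varphi$ as a predual element (i.e.\ $u \varphi = \varphi u$), I will compute
$$ \psi_1 w = (\psi_1 v_1)\, u v_2^* = \lambda_1 v_1 \varphi u v_2^* = \lambda_1 v_1 u \varphi v_2^* = \tfrac{\lambda_1}{\lambda_2}\, v_1 u (v_2^* \psi_2) = \tfrac{\lambda_1}{\lambda_2}\, w \psi_2, $$
which gives $\psi_1 \sim_M \psi_2$ with ratio $\mu = \lambda_1 / \lambda_2 > 0$.

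I do not anticipate any serious obstacle: the argument is essentially a direct ``composition of correspondences'' and bypasses the analytic machinery of Theorem~\ref{thm-intertwining} entirely. The only care points are the partial isometry bookkeeping, notably the identity $p_1 u = u$ extracted from $u u^* \leq p_1$, and the correct handling of the $M$--$M$-bimodule action on $M_\ast$ when moving $u$ past $\varphi$.
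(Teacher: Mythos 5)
Your proof is correct and follows essentially the same route as the paper: both use factoriality of $M_\varphi$ to produce a partial isometry $u\in M_\varphi$ linking the source projections $v_1^*v_1$ and $v_2^*v_2$, form $w=v_1uv_2^*$, and run the identical bimodule computation $\psi_1 w=\lambda_1\lambda_2^{-1}w\psi_2$. You merely spell out the comparison-of-projections step and the verification that $w$ is a nonzero partial isometry, which the paper leaves implicit.
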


\begin{proof}
Let $\psi_1, \psi_2 \in (M_\ast)^+$. Assume that $\psi_1 \sim_M \varphi$ and $\psi_2 \sim_M \varphi$. Then there exist nonzero partial isometries $v_1 \in s(\psi_1)M$ and $v_2 \in s(\psi_2)M$ and $\lambda_1, \lambda_2 > 0$ such that $\psi_1 v_1 = \lambda_1 v_1 \varphi$ and $\psi_2 v_2 = \lambda_2 v_2 \varphi$. Since $v_1^*v_1, v_2^*v_2 \in M_\varphi$ and since $M_\varphi$ is a factor, there exists a partial isometry $u \in M_\varphi$ such that $w = v_1 u v_2^*\in s(\psi_1) M s(\psi_2)$ is a nonzero partial isometry. It follows that
$$\psi_1 w = \psi_1 v_1 u v_2^* = \lambda_1 v_1 \varphi u v_2^* = \lambda_1 v_1 u \varphi v_2^* = \lambda_1\lambda_2^{-1} v_1 u v_2^* \psi_2 = \lambda_1 \lambda_2^{-1} w \psi_2.$$
This implies that $\psi_1 \sim_M \psi_2$.
\end{proof}

\section{Locally pointwise inner automorphisms}

\subsection{Locally pointwise inner automorphisms for weights}
Recall that a faithful normal semifinite weight $\varphi$ on a von Neumann algebra $M$ is {\bf strictly semifinite} if $\varphi|_{M_\varphi}$ is still semifinite. Inspired by \cite{HS91}, we introduce the following terminology.

\begin{df}\label{def-pointinner}
	Let $M$ be any von Neumann algebra and $\varphi$ any faithful normal strictly semifinite weight on $M$. Let $\Theta\in \Aut(M)$ be any automorphism. We say that $\Theta\in \Aut(M)$ is 
	 {\bf locally pointwise inner for} $\varphi$ if $\Theta(\varphi)=\varphi$ and for any nonzero projection $p\in M_\varphi$ with $\varphi(p)<\infty$ and any positive invertible element $h\in p M_\varphi p$, we have $\Theta(\varphi_h) \sim_M \varphi_h$.
\end{df}

We note that all inner automorphisms $\Ad(u)$ with $u \in \mathcal U(M_\varphi)$ are locally pointwise inner for $\varphi$. Also if $\Theta$ is locally pointwise inner for $\varphi$, then so are $\Theta^{-1}$ and $\Ad(u)\circ \Theta$ for any $u\in \mathcal U(M_\varphi)$. 

\begin{lem}\label{lem-pointinner}
	Let $M$ be any von Neumann algebra and $\varphi$ any faithful normal strictly semifinite weight on $M$. Let $\Theta\in \Aut(M)$ be any pointwise inner automorphism.
	 Then there exists $u \in \mathcal U(M)$ such that $\Ad(u) \circ \Theta$ is locally pointwise inner for $\varphi$.
\end{lem}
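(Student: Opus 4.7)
The plan is to first reduce the statement to finding a single $u \in \mathcal U(M)$ with $\Ad(u)\circ\Theta(\varphi)=\varphi$, and then to produce such a $u$ by an exhaustion-and-coherence argument on the centralizer. The second clause in the definition of ``locally pointwise inner''---the intertwining $\Theta'(\varphi_h)\sim_M\varphi_h$---is in fact automatic for \emph{any} pointwise inner automorphism $\Theta'$, irrespective of the choice of $u$. Indeed, $\varphi_h\in M_\ast^+$ is a normal positive linear functional with support $p$, so pointwise inner yields some $w\in\mathcal U(M)$ with $\Theta'(\varphi_h)=w\varphi_h w^*$; the partial isometry $v:=wp$ then belongs to $s(\Theta'(\varphi_h))\,M\,s(\varphi_h)$ and satisfies $\Theta'(\varphi_h)v=w\varphi_h=v\varphi_h$, giving $\Theta'(\varphi_h)\sim_M\varphi_h$. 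Since $\Ad(u)\circ\Theta$ is always pointwise inner, the entire content of the lemma is the equality $\Ad(u)\circ\Theta(\varphi)=\varphi$ for a suitably chosen $u$.

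To produce such a $u$, I would exhaust $\varphi$ by finite-valued positive functionals. Strict semifiniteness provides an increasing sequence of projections $(p_n)_{n\geq 1}$ in $M_\varphi$ with $p_n\nearrow 1$ strongly and $0<\varphi(p_n)<\infty$. Set $\psi_n:=p_n\varphi p_n\in M_\ast^+$, so that $\psi_n\nearrow\varphi$ pointwise on $M_+$. Pointwise inner furnishes, for each $n$, a unitary $u_n\in\mathcal U(M)$ with $\Theta(\psi_n)=u_n\psi_nu_n^*$; comparing supports gives $u_np_nu_n^*=\Theta(p_n)$ and $u_n^*\Theta(\varphi)u_n$ agrees with $\varphi$ on $p_nMp_n$. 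The goal is to refine these $u_n$ inductively so as to secure the coherence relation $u_{n+1}p_n=u_np_n$, and then to obtain $u$ as the $\sigma$-strong limit of $(u_np_n)$.

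The inductive step uses the freedom available in the stabilizer of $\psi_n$, which decomposes as $\{v\in\mathcal U(M):v\psi_nv^*=\psi_n\}=\mathcal U(p_nM_\varphi p_n)\oplus\mathcal U((1-p_n)M(1-p_n))$. Given $u_n$ and any candidate $u_{n+1}'$ with $\Theta(\psi_{n+1})=u_{n+1}'\psi_{n+1}u_{n+1}'^*$, the element $u_{n+1}'^*u_np_n$ is a partial isometry in $p_{n+1}Mp_{n+1}$ from $p_n$ to $q:=u_{n+1}'^*\Theta(p_n)u_{n+1}'$. The modular characterization of centralizers together with $\Ad(u_{n+1}')^*\Theta(\varphi)|_{p_{n+1}Mp_{n+1}}=\varphi|_{p_{n+1}Mp_{n+1}}$ shows that $q\in p_{n+1}M_\varphi p_{n+1}$, and testing $\Theta(\psi_{n+1})$ against $\Theta(p_n)$ gives $\tau(q)=\varphi(p_n)=\tau(p_n)$ where $\tau:=\varphi|_{M_\varphi}$ is the tracial weight of the semifinite centralizer. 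Murray--von Neumann theory in the corner $p_{n+1}M_\varphi p_{n+1}$ then yields a unitary $\tilde v_1\in\mathcal U(p_{n+1}M_\varphi p_{n+1})$ matching $p_n$ to $q$ along the correct partial isometry, and $u_{n+1}:=u_{n+1}'(\tilde v_1+(1-p_{n+1}))$ completes the inductive step.

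With coherence in hand, $(u_np_n)$ is $\sigma$-strongly Cauchy because for $m\geq n$ one has $u_mp_m-u_np_n=u_m(p_m-p_n)$ and $u_m$ is isometric. Its limit $u\in M$ satisfies $u^*u=\lim p_n=1$ and $uu^*=\lim u_np_nu_n^*=\lim\Theta(p_n)=1$, so $u\in\mathcal U(M)$, with $up_n=u_np_n$ for every $n$. A short computation using $u^*u_np_n=p_n$ shows $\Ad(u^*)\circ\Theta(\psi_n)(x)=\varphi(p_nxp_n)$ for all $x\in M_+$, and letting $n\to\infty$ yields $\Ad(u^*)\circ\Theta(\varphi)=\varphi$, as required. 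The main obstacle I expect is the Murray--von Neumann step inside $p_{n+1}M_\varphi p_{n+1}$: equality of scalar traces is immediate, but if $M_\varphi$ has nontrivial center one must also match the central-valued traces of $p_n$ and $q$. Passing to a suitable refinement of the exhausting sequence, or exploiting pointwise inner on a richer family of states detecting central components, should resolve this.
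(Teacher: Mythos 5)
Your opening reduction is exactly the paper's: the intertwining half of the definition is automatic for any pointwise inner automorphism fixing $\varphi$ (the paper uses precisely your $v = wp$ computation), so the whole content of the lemma is producing $u$ with $(\Ad(u)\circ\Theta)(\varphi)=\varphi$. Where you diverge is that the paper obtains this $u$ by simply invoking \cite[Lemma 2.1]{HS88}, whereas you reprove it from scratch by exhaustion and coherence. Your scheme for doing so is sound, and what it buys is a self-contained argument; what it costs is the technical work you partially flag at the end.

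That flagged obstacle is real as you have phrased it --- equality of the scalar traces $\tau(p_n)=\tau(q)$ does not yield $p_n\sim q$ in $p_{n+1}M_\varphi p_{n+1}$ when $M_\varphi$ has nontrivial center --- but it dissolves once you observe two things. First, the partial isometry $w=u_{n+1}'^{\,*}u_np_n$ already lies in the centralizer corner $p_{n+1}M_\varphi p_{n+1}$: writing $\psi=\psi_{n+1}$, one has $\psi(u_{n+1}'^{\,*}u_np_nx)=\Theta(\psi)(u_np_nxu_{n+1}'^{\,*})=\Theta(\psi p_n)(u_np_nxu_{n+1}'^{\,*})=\Theta(\psi_n)(u_np_nxu_{n+1}'^{\,*})=\psi_n(p_nxu_{n+1}'^{\,*}u_n)=\psi(xu_{n+1}'^{\,*}u_np_n)$, i.e.\ $\psi_{n+1}w=w\psi_{n+1}$, so $w\in M_{\psi_{n+1}}=p_{n+1}M_\varphi p_{n+1}$. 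Hence $p_n$ and $q=ww^*$ are Murray--von Neumann equivalent \emph{inside the centralizer corner via $w$ itself}, and no comparison of central traces is needed. Second, $p_{n+1}M_\varphi p_{n+1}$ is a finite von Neumann algebra (since $\varphi|_{M_\varphi}$ is a semifinite trace and $\varphi(p_{n+1})<\infty$), and in a finite von Neumann algebra $p\sim q$ automatically implies $1-p\sim 1-q$; so $w$ extends to a unitary $\tilde v_1\in\mathcal U(p_{n+1}M_\varphi p_{n+1})$ with $\tilde v_1p_n=w$, which is all the coherence step requires. With that patch your limit argument goes through (note $up_n=u_np_n$ forces $p_nu^*=p_nu_n^*$, whence $uu^*=\lim_n up_nu^*=\lim_n\Theta(p_n)=1$), and your proof becomes a complete substitute for the citation of \cite[Lemma 2.1]{HS88}.
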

\begin{proof}
Let $\Theta \in \Aut(M)$ be any pointwise inner automorphism. Then there exists $u \in \mathcal U(M)$ such that $(\Ad(u) \circ \Theta)(\varphi) = \varphi$ (see \cite[Lemma 2.1]{HS88}). Set $\Theta^u = \Ad(u) \circ \Theta$. Let  $p\in M_\varphi$ be any nonzero projection with $\varphi(p)<\infty$ and $h\in p M_\varphi p$ any positive invertible element. Since $\Theta^u$ is pointwise inner, there is a unitary $v\in \mathcal U(M)$ such that $\Theta^u(\varphi_h) = v\varphi_h v^*$. By comparing support projections, we have $\Theta^u(p) = vpv^*$. Then letting $w =vp =\Theta^u(p)v$, we obtain $\Theta^u(\varphi_h)w = w\varphi_h $. Thus, $\Theta^u$ is locally pointwise inner for $\varphi$.
\end{proof}

Next, we observe that locally pointwise inner automorphisms behave well with respect to amplifications. Denote by $\Tr$ the unique faithful normal semifinite trace on $\rB(\ell^2)$ such that all minimal projections $e \in \rB(\ell^2)$ satisfy $\Tr(e) = 1$.

\begin{lem}\label{lem-amplification}
Let $M$ be any von Neumann algebra and $\varphi \in M_\ast$ any faithful normal state such that $M_\varphi$ is a type ${\rm II_1}$ factor. Let $\Theta \in \Aut(M)$ be any automorphism and consider $\Theta \otimes \id \in \Aut(M \ovt \rB(\ell^2))$. 

If $\Theta$ is locally pointwise inner for $\varphi$, then $\Theta \otimes \id$ is locally pointwise inner for $\varphi \otimes \Tr$.
\end{lem}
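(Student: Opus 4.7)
The plan is to reduce to the hypothesis on $(M,\varphi)$ via a two-stage reduction inside the centralizer $(M \ovt \rB(\ell^2))_{\varphi \otimes \Tr} = M_\varphi \ovt \rB(\ell^2)$. Since $\Theta(\varphi)=\varphi$ is built into the locally pointwise inner assumption, $\Theta \otimes \id$ preserves $\varphi \otimes \Tr$ and hence also the centralizer $M_\varphi \ovt \rB(\ell^2)$, which is a type $\mathrm{II}_\infty$ factor because $M_\varphi$ is a type ${\rm II_1}$ factor. Write $N := M \ovt \rB(\ell^2)$, $\tau := \varphi \otimes \Tr$, and $\Theta' := \Theta \otimes \id$; fix a nonzero projection $P \in N_\tau$ with $\tau(P) < \infty$ and a positive invertible $H \in PN_\tau P$. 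Since $\Theta'(\tau_H) = \tau_{\Theta'(H)}$, the aim is to produce a nonzero partial isometry in $N$ that intertwines $\tau_{\Theta'(H)}$ and $\tau_H$ in the sense of $\sim_N$.

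The first reduction is spectral: the positive invertible $H$ carries an abundance of spectral projections in the ${\rm II_1}$ corner $PN_\tau P$, so I would choose a projection $Q \in PN_\tau P$ with $QH = HQ$ and $0 < \tau(Q) \leq 1$ (a small spectral block of $H$ if it has trace $\leq 1$, otherwise a subprojection of a larger spectral block, taken inside the ${\rm II_1}$ corner it generates), and set $\tilde H := QHQ = HQ$, which is positive invertible in $QN_\tau Q$. The second reduction uses that $N_\tau$ is a type $\mathrm{II}_\infty$ factor: since $\tau(Q) \leq 1$, there is a partial isometry $W \in N_\tau$ with $W^*W = p \otimes e_{11}$ and $WW^* = Q$, where $p \in M_\varphi$ is a projection with $\varphi(p) = \tau(Q)$. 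Then $W^*\tilde H W$ lies in $(p \otimes e_{11}) N_\tau (p \otimes e_{11}) = pM_\varphi p \otimes \C e_{11}$ and identifies with $h \otimes e_{11}$ for a positive invertible $h \in pM_\varphi p$.

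The intertwining is then assembled in three pieces. First, a direct cyclicity computation using $W, W^*, \tilde H \in N_\tau$ yields $\tau_{\tilde H} W = W \tau_{h \otimes e_{11}}$, and applying $\Theta'$ gives $\tau_{\Theta'(\tilde H)} \Theta'(W) = \Theta'(W) \tau_{\Theta(h) \otimes e_{11}}$. Second, the hypothesis that $\Theta$ is locally pointwise inner for $\varphi$, applied to $h \in pM_\varphi p$, supplies a nonzero partial isometry $u \in M$ and a scalar $\lambda > 0$ with $\varphi_{\Theta(h)} u = \lambda u \varphi_h$; tensoring with $e_{11}$ then gives $\tau_{\Theta(h) \otimes e_{11}}(u \otimes e_{11}) = \lambda (u \otimes e_{11}) \tau_{h \otimes e_{11}}$ in $N$. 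Third, setting
\[
X := \Theta'(W)\,(u \otimes e_{11})\, W^* \in \Theta'(Q)\, N\, Q,
\]
one checks, by cycling the centralizer elements $\Theta'(W), W^*, \tilde H, \Theta'(\tilde H)$ against $\tau$, that $X$ is a nonzero partial isometry and satisfies $\tau_{\Theta'(\tilde H)} X = \lambda X \tau_{\tilde H}$.

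To finish, transfer the intertwining from $\tilde H$ to $H$: from $X = \Theta'(Q)X = XQ$ together with $QH = HQ$ one obtains $\Theta'(H)X = \Theta'(HQ)X = \Theta'(\tilde H)X$ and $XH = XQH = XHQ = X\tilde H$, so the identity $\tau_{\Theta'(\tilde H)}X = \lambda X \tau_{\tilde H}$ upgrades immediately to $\tau_{\Theta'(H)}X = \lambda X \tau_H$, which is exactly $\Theta'(\tau_H) \sim_N \tau_H$. The main obstacle is the third piece above: the cyclicity moves in the verification of the intertwining identity for $X$ must match up so that the single scalar $\lambda$ produced by the hypothesis on $M$ is not perturbed by the two intermediate intertwinings by $W$ and $\Theta'(W)$. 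The spectral choice of $Q$ in the first reduction is precisely what makes the final passage from $\tilde H$ to $H$ cost nothing further.
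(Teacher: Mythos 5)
Your proof is correct and follows essentially the same route as the paper's: cut $H$ down to a commuting projection $Q$ of trace at most $1$ using diffuseness of the type ${\rm II_1}$ corner, move $Q$ to a projection of the form $p \otimes e_{11}$ using that the centralizer $M_\varphi \ovt \rB(\ell^2)$ is a type ${\rm II_\infty}$ factor, apply the local pointwise innerness of $\Theta$ in $M$, and conjugate the resulting intertwiner back, finishing with the observation that $QH = HQ$ upgrades the intertwining from $\tilde H$ to $H$. The only cosmetic difference is that the paper produces your $Q$ (its $p_0$) inside a maximal abelian subalgebra of the ${\rm II_1}$ corner containing $H$, which packages your spectral-block case distinction into one clean step; everything else matches.
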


\begin{proof}
Assume that $\Theta \in \Aut(M)$ is locally pointwise inner for $\varphi$. Set $\mathcal M = M \ovt \rB(\ell^2)$ and $\Phi = \varphi \otimes \Tr$. Since $\Theta(\varphi) = \varphi$, we have $(\Theta \otimes \id)(\Phi) = \Phi$. Let $p \in \mathcal M_\Phi$ be any nonzero projection such that $\Phi(p) < \infty$ and $h \in \mathcal M_\Phi$ any positive invertible element. Choose a maximal abelian subalgebra $A \subset p \mathcal M_\Phi p$ such that $h \in A$. Since $p\mathcal M_\Phi p$ is a type ${\rm II_1}$ factor, $A \subset p \mathcal M_\Phi p$ is diffuse and so we may choose a nonzero projection $p_0 \in A$ such that $\Phi(p_0) \leq 1$. Note that $p_0 h = hp_0$ is a positive invertible element in $p_0 \mathcal M_\Phi p_0$. Since $\mathcal M_\Phi = M_\varphi \ovt \rB(\ell^2)$ is a type ${\rm II}_\infty$ factor, there exists $u \in \mathcal U(\mathcal M_\Phi)$, a projection $q_0 \in M_\varphi$ and a minimal projection $e_0 \in \rB(\ell^2)$ such that $u p_0 u^* = q_0 \otimes e_0$. Denote by $h_0 \in q_0M_\varphi q_0$ the positive invertible element such that $ u(hp_0)u^* = h_0 \otimes e_0$. Since $\Theta \in \Aut(M)$ is locally pointwise inner for $\varphi$, there exist a nonzero partial isometry $v \in \Theta(q_0) M q_0$ and $\lambda > 0$ such that $\Theta(\varphi_{h_0}) v = \lambda v \varphi_{h_0}$. Then $(\Theta \otimes \id)(\Phi_{h_0 \otimes e_0}) (v \otimes e_0) = \lambda (v \otimes e_0) \Phi_{h_0 \otimes e_0}$.

Observe that $\Phi_{h_0 \otimes e_0} =  u \Phi_{hp_0} u^*$. Then we obtain 
\begin{align*}
(\Theta \otimes \id)( u) ( \Theta \otimes \id) (\Phi_{hp_0}) (\Theta \otimes \id)(u^*) (v \otimes e_0) &= (\Theta \otimes \id)(\Phi_{h_0 \otimes e_0}) (v \otimes e_0) \\
&=  \lambda (v \otimes e_0) \Phi_{h_0 \otimes e_0} \\
&= \lambda (v \otimes e_0) u \Phi_{hp_0} u^*
\end{align*}
Letting $w = (\Theta \otimes \id)(u^*) (v \otimes e_0) u \in (\Theta \otimes \id)(p_0)\mathcal M p_0$, we have $w \neq 0$ and
$$ ( \Theta \otimes \id) (\Phi_h) w = ( \Theta \otimes \id) (\Phi_{hp_0}) w  = \lambda w \Phi_{p_0h} = \lambda w \Phi_{h}.$$
Therefore, $\Theta \otimes \id$ is locally pointwise inner for $\Phi = \varphi \otimes \Tr$.
\end{proof}

\subsection{A characterization of locally pointwise inner automorphisms for extremal almost periodic states}
Before proving the main characterization, we need some preliminary results on almost periodic weights.

\begin{lem}\label{lem-eigenvector-unitary}
Let $(M, \varphi)$ be any type $\rm III$ factor endowed with an extremal almost periodic faithful normal semifinite weight such that $\varphi(1) = \infty$. 
\begin{enumerate}[\rm (i)]

	\item Let $v\in M$ be any partial isometry that is an eigenvector for $\varphi$. If $v^*v$ or $vv^*$ is finite in $M_\varphi$, then there is a unitary $V \in \mathcal U(M)$ that is an eigenvector for $\varphi$ and such that $Vv^*v=vv^*V = v$.

	\item Let $\mathcal G\subset \Aut(M_\varphi)$ be the set of all automorphisms of the form $\Ad(u)$ where $u\in \mathcal U(M)$ is an eigenvector for $\varphi$. Then the image of $\mathcal G$ in $\Out(M_\varphi)$ is countable.

\end{enumerate}
\end{lem}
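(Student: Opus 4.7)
For part (i), the plan is to extend $v$ to a unitary $\lambda$-eigenvector by filling in the complement $1 - v^*v \to 1 - vv^*$ with another $\lambda$-eigenvector partial isometry, where $\lambda \in \Lambda$ is the eigenvalue of $v$. First I would record that, since $v \in M_\varphi(\lambda)$, the KMS condition forces $\varphi(vv^*) = \lambda\, \varphi(v^*v)$; in particular $v^*v$ is $\varphi$-finite if and only if $vv^*$ is. After replacing $v$ by $v^*$ if needed (noting that $v^* \in M_\varphi(\lambda^{-1})$ and that the desired conclusion is symmetric under this substitution), I may assume that both $p := v^*v$ and $q := vv^*$ are $\varphi$-finite. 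Because $M$ is type III and $\varphi$ is extremal almost periodic with $\varphi(1) = \infty$, the centralizer $M_\varphi$ is a type II$_\infty$ factor with semifinite trace $\varphi|_{M_\varphi}$, so $1 - p$ and $1 - q$ are infinite projections in $M_\varphi$.

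The core step is then a standard tiling in the type II$_\infty$ factor $M_\varphi$: write $1 - p = \sum_{n \geq 1} p_n$ and $1 - q = \sum_{n \geq 1} q_n$ as orthogonal sums of projections with $p_n \sim p$ and $q_n \sim q$ in $M_\varphi$, fix implementing partial isometries $u_n, w_n \in M_\varphi$ with $u_n^*u_n = p_n$, $u_n u_n^* = p$, $w_n^* w_n = q$, $w_n w_n^* = q_n$, and set $v_n := w_n v u_n$. Since $u_n$ and $w_n$ are fixed by $\sigma^\varphi$, each $v_n$ is again a $\lambda$-eigenvector, and by construction it is a partial isometry from $p_n$ to $q_n$. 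Then $V := v + \sum_n v_n$ converges in the strong operator topology by orthogonality of the sources and ranges, is a $\lambda$-eigenvector unitary in $M$, and satisfies $V p = v = q V$ as required. I do not foresee a real obstacle here beyond some bookkeeping for the strong convergence of the sum.

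For part (ii), the key observation is that whenever $u_1, u_2 \in \mathcal U(M)$ are both eigenvectors of $\varphi$ with the same eigenvalue $\lambda$, one has $\sigma_t^\varphi(u_1 u_2^*) = \lambda^{{\rm i}t} \lambda^{-{\rm i}t}\, u_1 u_2^* = u_1 u_2^*$, so $u_1 u_2^* \in \mathcal U(M_\varphi)$. Consequently $\Ad(u_1)|_{M_\varphi}$ and $\Ad(u_2)|_{M_\varphi}$ differ by an inner automorphism of $M_\varphi$ and represent the same class in $\Out(M_\varphi)$. The plan is therefore to observe that the map $\mathcal G \to \Out(M_\varphi)$ factors through the eigenvalue of the representing unitary, taking values in the countable set $\sigma_p(\Delta_\varphi) \subset \Lambda$ (countable by separability of $M_\ast$). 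This part is purely formal once (i) is in hand.
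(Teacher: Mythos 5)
Your proposal is correct and follows essentially the same route as the paper: in (i) you tile $1-v^*v$ and $1-vv^*$ by countably many mutually orthogonal copies of the finite projections inside the type ${\rm II}_\infty$ factor $M_\varphi$ and conjugate $v$ by the implementing partial isometries to build the unitary eigenvector $V$, and in (ii) you observe that two unitary eigenvectors with the same eigenvalue differ by a unitary in $M_\varphi$, so the class in $\Out(M_\varphi)$ depends only on the eigenvalue, of which there are countably many. The only cosmetic difference is that you transfer finiteness of $v^*v$ to $vv^*$ via the trace identity $\varphi(vv^*)=\lambda\,\varphi(v^*v)$, while the paper uses the isomorphism $\Ad(v)\colon v^*v\,M_\varphi\,v^*v\to vv^*\,M_\varphi\,vv^*$; both are fine.
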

\begin{proof}
	$(\rm i)$ Let $\lambda \in \sigma_p(\Delta_\varphi)$ and $v \in M$ a partial isometry that is a $\lambda$-eigenvector for $\varphi$. Set $p=vv^*$ and $q=v^*v$. Since $\Ad(v) : qM_\varphi q \to pM_\varphi p$ is an isomorphism, it follows that $p$ is finite in $M_\varphi$ if and only if $q$ is finite in $M_\varphi$. By the assumption, both $p$ and $q$ are finite in $M_\varphi$. Since $M_\varphi$ is a type ${\rm II}_\infty$ factor, there are two families of mutually orthogonal equivalent projections $(p_n)_{n\in \N}$ with $p_0=p$ and $(q_n)_{n\in \N}$ with $q_0=q$ in $M_\varphi$. Let $(v_n)_{n \in \N}, (w_n)_{n \in \N}$ be families of partial isometries in $M_\varphi$ such that 
	$$\forall n \in \N, \quad  v_nv_n^*=p,\quad v_n^*v_n=p_n,\quad  w_nw_n^*=q,\quad w_n^*w_n=q_n.$$
Then $V = \sum_{n \in \N} v_n^* v w_n \in \mathcal U(M)$ is a $\lambda$-eigenvector for $\varphi$ such that $Vq=pV = v$.

	$(\rm ii)$ Let $\lambda \in \sigma_p(\Delta_\varphi)$. If $u,w\in \mathcal U(M)$ are $\lambda$-eigenvectors for $\varphi$, then $w^*u\in \mathcal U(M_\varphi)$. This shows that all the automorphisms $\Ad(u)|_{M_\varphi}$ where $u \in \mathcal U(M)$ is a $\lambda$-eigenvector are unitarily equivalent in $\Aut(M_\varphi)$ and so determine the same element in $\Out(M_\varphi)$. Since $\sigma_p(\Delta_\varphi)$ is countable,  the image of $\mathcal G$ in $\Out(M_\varphi)$ is countable.
\end{proof}

The next proposition generalizes \cite[Theorem 2.1]{HS91} to the case of locally pointwise inner automorphisms. 

\begin{prop}\label{HS-prop}
	Let $(M, \varphi)$ be any type $\rm III$ factor endowed with an extremal almost periodic faithful normal semifinite weight such that $\varphi(1) = \infty$. Let $\Theta\in \Aut(M)$ be any automorphism that is locally pointwise inner for $\varphi$. Then $\Theta$ is inner on $M_\varphi$.
\end{prop}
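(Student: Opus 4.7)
The plan is to produce a unitary $V$ in the normalizer $\mathcal N_M(M_\varphi)$ such that $\Theta|_{M_\varphi} = \Ad(V)|_{M_\varphi}$, and then to use trace preservation to force $V \in M_\varphi$. Since $\varphi$ is extremal with $\varphi(1) = \infty$, the centralizer $M_\varphi$ is a type $\rm II_\infty$ factor and $\tau := \varphi|_{M_\varphi}$ is a faithful normal semifinite trace (because $\sigma^\varphi$ is trivial on $M_\varphi$). The hypothesis $\Theta(\varphi) = \varphi$ forces $\Theta(M_\varphi) = M_\varphi$ and $\tau \circ \Theta = \tau$.

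The second half of the plan is automatic: any unitary $V \in \mathcal N_M(M_\varphi)$ is an eigenvector of $\varphi$. Indeed, from $VM_\varphi V^* = M_\varphi$ and the triviality of $\sigma^\varphi$ on $M_\varphi$ one deduces that $V^*\sigma_t^\varphi(V)$ centralizes $M_\varphi$; combined with the irreducibility $M_\varphi' \cap M \subset M_\varphi$ established in the preliminaries and the factoriality of $M_\varphi$, this gives $V^*\sigma_t^\varphi(V) \in \C$, so by continuity $V \in M_\varphi(\mu)$ for some $\mu \in \sigma_p(\Delta_\varphi) = \Lambda$. The Connes cocycle $[\rD\, V\varphi V^* : \rD \varphi]_t = V\sigma_t^\varphi(V^*) = \mu^{-{\rm i}t}$ yields $V\varphi V^* = \mu^{-1}\varphi$, and hence $\Ad(V)$ scales $\tau$ on $M_\varphi$ by $\mu$. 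Thus, once $V \in \mathcal N_M(M_\varphi)$ is produced with $\Theta|_{M_\varphi} = \Ad(V)|_{M_\varphi}$, the identity $\tau \circ \Theta = \tau$ forces $\mu = 1$, i.e.\ $V \in M_\varphi(1) = M_\varphi$ and $\Theta|_{M_\varphi} \in \Inn(M_\varphi)$.

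It remains to build such a $V$ from the locally pointwise inner hypothesis. Fix a nonzero finite projection $p \in M_\varphi$ and a positive invertible $h \in pM_\varphi p$. Since $\Theta(\varphi) = \varphi$, we have $\Theta(\varphi_h) = \varphi_{\Theta(h)}$, so the locally pointwise inner hypothesis combined with Theorem \ref{thm-intertwining} yields a nonzero partial isometry $v \in \Theta(p)Mp$ and $\lambda > 0$ with $\Theta(\varphi_h) v = \lambda v \varphi_h$. Using $h, \Theta(h) \in M_\varphi$ together with the KMS condition, this rewrites as $\varphi(vy\Theta(h)) = \lambda \varphi(yvh)$ for all $y \in M$, from which one reads off $\sigma_t^\varphi(v) = \lambda^{{\rm i}t} v$; hence $v \in M_\varphi(\lambda)$ with $\lambda \in \Lambda$, and Lemma \ref{lem-eigenvector-unitary}(i) promotes $v$ to a unitary $\lambda$-eigenvector of $\varphi$. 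Adapting the maximality/Baire-category argument of Haagerup--St\o rmer \cite[Theorem 2.1]{HS91}---running this construction across a countable norm-dense family of positive invertibles in each finite corner of $M_\varphi$ and across an exhaustion of $M_\varphi$ by finite projections, while using Lemma \ref{lem-eigenvector-unitary}(ii) to constrain the accumulated eigenvector corrections modulo $\Inn(M_\varphi)$ to a countable subgroup of $\Out(M_\varphi)$---one assembles a single unitary eigenvector $V \in \mathcal N_M(M_\varphi)$ with $\Theta|_{M_\varphi} = \Ad(V)|_{M_\varphi}$. The main obstacle is precisely this assembly: coherently patching the local partial isometries $v_h$, each carrying its own eigenvalue $\lambda_h \in \Lambda$, into a single unitary, while keeping track of the eigenvalue corrections via Lemma \ref{lem-eigenvector-unitary}(ii) and exploiting the type $\rm II_\infty$ factor structure of $M_\varphi$ together with trace preservation to guarantee the equivalence of finite projections with their $\Theta$-images needed for patching.
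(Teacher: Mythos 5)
Your endgame is sound and matches the paper's: a unitary $V$ normalizing $M_\varphi$ is automatically a $\mu$-eigenvector of $\varphi$ (via $V^*\sigma_t^\varphi(V)\in M_\varphi'\cap M\subset \mathcal Z(M_\varphi)=\C$), $\Ad(V)$ scales $\tau=\varphi|_{M_\varphi}$ by $\mu$, and $\tau\circ\Theta=\tau$ forces $\mu=1$. But the core of the argument is missing, and there is also a local error.

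The local error: from $\Theta(\varphi_h)v=\lambda v\varphi_h$ with $h,\Theta(h)\in M_\varphi$ you cannot ``read off'' $\sigma_t^\varphi(v)=\lambda^{{\rm i}t}v$. The cocycle computation only gives $\Theta(h)^{{\rm i}t}\,\sigma_t^\varphi(v)\,h^{-{\rm i}t}=\lambda^{{\rm i}t}v$ (equivalently $\Theta(h)\sigma_{-{\rm i}}^\varphi(v)=\lambda vh$ formally), which does not make $v$ an eigenvector unless $h$ is a scalar multiple of a projection. The paper repairs this by noting that the intertwining relation is preserved under $\sigma_g^{\Lambda,\varphi}$ for all $g\in G=\widehat\Lambda$, so some eigenspace projection $\rE_\lambda(x)=\int_G\overline{\langle g,\lambda\rangle}\,\sigma_g^{\Lambda,\varphi}(x)\,{\rm d}g$ is a nonzero intertwiner; only after this averaging does one obtain a partial isometry eigenvector $v_\lambda$ with $v_\lambda^*v_\lambda\in\{hp\}'\cap pM_\varphi p$.

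The genuine gap is the assembly step, which you yourself flag as ``the main obstacle.'' The paper does not patch local intertwiners $v_h$ over a dense family of $h$'s at all. Instead it forms the countable-image subgroup $\mathcal G\leq\Aut(M_\varphi)$ generated by $\Theta|_{M_\varphi}$, $\Inn(M_\varphi)$ and the $\Ad(u)$ for unitary eigenvectors $u$ (countability is Lemma \ref{lem-eigenvector-unitary}(ii)), invokes Popa's theorem \cite[Theorem 4.2]{Po83} to produce a single $\mathcal G$-singular masa $A\subset M_\varphi$ generated by finite projections, and takes $h$ to be a generator of $A$. Then the relative commutant computation above forces $v_\lambda^*v_\lambda\in Ap$ and $v_\lambda v_\lambda^*\in\Theta(Ap)$, so after promoting $v_\lambda$ to a unitary eigenvector $v$ one gets $(\Ad(v^*)\circ\Theta)(Ar)\subset A$; singularity yields $(\Ad(v^*)\circ\Theta)|_{rM_\varphi r}=\Ad(u)|_{rM_\varphi r}$ with $u\in\mathcal U(Ar)$, the trace computation gives $\lambda=1$, and one concludes because a non properly outer automorphism of the factor $M_\varphi$ is inner. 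Thus only one $h$ and one corner are ever needed; no global unitary $V$ implementing $\Theta$ on all of $M_\varphi$ has to be constructed directly, and no coherence between eigenvalues $\lambda_h$ has to be arranged. Without the singular masa input (or an equivalent device), your proposed patching scheme is not a proof.
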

\begin{proof}
Set $\Lambda = \sigma_p(\Delta_\varphi)$ and $G = \widehat \Lambda$. Since $\Theta \in \Aut(M)$ is locally pointwise inner for $\varphi$, we have $\Theta(\varphi) = \varphi$ and $\Theta \circ \sigma_g^{\Lambda, \varphi} = \sigma_g^{\Lambda, \varphi} \circ \Theta$ for every $g \in G$. We denote by $\mathcal{G}\leq \Aut(M_\varphi)$ the subgroup generated by $\Theta|_{M_\varphi}$,  $\Inn(M_\varphi)$, and all $\Ad(u)$'s where $u\in \mathcal U(M)$ is a $\lambda$-eigenvector for $\varphi$ and $\lambda \in \Lambda$ is arbitrary. Then by Lemma \ref{lem-eigenvector-unitary}, the image of $\mathcal{G}$ in $\Out(M_\varphi)$ is countable. 

By \cite[Theorem 4.2]{Po83}, there exists a masa $A\subset M_\varphi$ generated by finite projections in $M_\varphi$ that is $\mathcal{G}$-singular in the following sense:
\begin{itemize}
	\item If $\alpha\in \mathcal{G}$ is such that $\alpha(Ae)\subset A$ for some projection $e\in A$, then $\alpha|_{eM_{\varphi}e} = \Ad(u)$ for some $u\in \mathcal U(Ae)$.
\end{itemize}
Note that $\varphi|_A$ is semifinite. We fix a positive invertible element $h\in A$ that generates $A \subset M_\varphi$. Let $p\in A$ be any nonzero projection such that $\varphi(p)<\infty$. Since $\Theta $ is locally pointwise inner for $\varphi$, there exists a nonzero element $x\in\Theta(p) Mp$ and $\kappa >0$ such that $\Theta(\varphi_{hp})x= \kappa x\varphi_{hp}$. 
\begin{claim}
	There exist $\lambda\in \Lambda$ and a nonzero partial isometry $v_\lambda \in M$ that is a $\lambda$-eigenvector for $\varphi$ such that 
$$v_\lambda^*v_\lambda \in Ap, \quad v_\lambda v_\lambda^*\in \Theta(Ap) \quad \text{and} \quad \Theta(hp) v_\lambda =  \kappa\lambda^{-1 }v_\lambda hp.$$
\end{claim}
\begin{proof}
	For every $\lambda\in \Lambda$, the projection $\rE_\lambda : M \to M_\varphi(\lambda)$ onto the subspace of $\lambda$-eigenvectors for $\varphi$ is given by
	$$\forall a\in M, \quad \rE_\lambda (a) = \int_G \overline{\langle g,\lambda \rangle} \sigma^{\Lambda,\varphi}_g(a) \rd \!g.$$
Let $\lambda\in \Lambda $ be such that $x_\lambda=\rE_\lambda(x)\neq 0$. Note that $x_\lambda \in \Theta(p)M p$. Since for all $g\in G$, we have $\Theta(\varphi_{hp})\sigma^{\Lambda,\varphi}_g(x)=\kappa\sigma^{\Lambda,\varphi}_g(x)  \varphi_{hp}$, we infer that $\Theta(\varphi_{hp})x_\lambda=\kappa x_\lambda \varphi_{hp}$. Since $x_\lambda$ is a $\lambda$-eigenvector for $\varphi$, this implies that $ \lambda \Theta(hp) x_\lambda =  \kappa x_\lambda hp$. Write $x_\lambda = v_\lambda |x_\lambda|$ for the polar decomposition of $x_\lambda \in M$. Then it is easy to see that $\lambda \Theta(hp) v_\lambda = \kappa v_\lambda hp$ where
	$$ v_\lambda^*v_\lambda \in \{hp\}' \cap pM_\varphi p=Ap \quad \text{and} \quad v_\lambda v_\lambda^*\in \{\Theta(hp)\}'\cap \Theta(pMp) = \Theta(Ap).$$
This finishes the proof of the claim.
\end{proof}

Set $q=v_\lambda^*v_\lambda\in Ap$ and $\Theta(r)=v_\lambda v_\lambda^*\in \Theta(Ap)$. Since $M_\varphi$ is a type ${\rm II}_\infty$ factor, by Lemma \ref{lem-eigenvector-unitary}, there is a unitary $v\in \mathcal U(M)$ that is a $\lambda$-eigenvector for $\varphi$ such that $v q = \Theta(r) v =v_\lambda$. We have
	$$v^* \Theta(hr) v=  \kappa\lambda^{-1}hq  \quad \text{and} \quad v^*\Theta(r) v = q.$$
Since $h$ generates $A$ and since $q,r\in A$, we have $(\Ad(v^*)\circ \Theta)|_{Ar}=Aq\subset A$. 
Since $A$ is $\mathcal G$-singular, we can write $(\Ad(v^*)\circ \Theta) |_{rM_\varphi r} = \Ad(u)|_{rM_\varphi r}$ for some $u\in \mathcal U(Ar)$. This implies that $(\Ad(v^*)\circ \Theta)(r) = r$ and so $q=r$. Observe that 
	$$\varphi(r) = \varphi(\Theta(r))=\varphi(v_\lambda v_\lambda^*)=\lambda \varphi(v_\lambda^* v_\lambda)=\lambda \varphi(q) = \lambda \varphi(r).$$
This implies $\lambda=1$. Then $v\in \mathcal U(M_\varphi)$ and the equation $(\Ad(v^*)\circ \Theta) |_{rM_\varphi r} = \Ad(u)|_{rM_\varphi r}$ shows that $\Theta|_{M_\varphi}$ is not properly outer. Since $M_\varphi$ is a factor, it follows that $\Theta|_{M_\varphi}$ is inner.
\end{proof}

Let $(M, \varphi)$ be any von Neumann algebra endowed with an almost periodic faithful normal state. Denote by $\Lambda < \R^*_+$ the group generated by the point spectrum $\sigma_p(\Delta_\varphi)$ and set $G=\widehat{\Lambda}$. Denote by $\rd_{\Lambda,\varphi}(M)=M\rtimes_{\sigma^{\Lambda,\varphi}} G$ the discrete core of $M$ with respect to $\Lambda$ and $ \varphi$. 

Let $\Theta \in \Aut(M)$ be any automorphism such that $\Theta(\varphi) = \varphi$. Since $\Theta$ commutes with $\sigma^{\Lambda,\varphi}$, it has a unique extension $\widetilde{\Theta}\in \Aut(\rd_{\Lambda,\varphi}(M))$ defined by
	$$\forall g \in G, \quad \widetilde{\Theta}(\lambda_g)=\lambda_g \quad \text{and}\quad   \widetilde{\Theta}|_M=\Theta.$$
Note that $\widetilde{\Theta}$ commutes with the dual action $\widehat \sigma^{\Lambda, \varphi} : \Lambda \curvearrowright \rd_{\Lambda, \varphi}(M)$.

We obtain the following characterization of locally pointwise inner automorphisms for extremal almost periodic states.

\begin{thm}\label{thm-PI}
	Let $(M, \varphi)$ be any factor endowed with an extremal almost periodic faithful normal state. Set $\Lambda =\sigma_p(\Delta_\varphi)$ and $G=\widehat{\Lambda}$. Let $\Theta\in \Aut(M)$ be any automorphism such that $\Theta(\varphi)=\varphi$. The following assertions are equivalent:
\begin{enumerate}[\rm (i)]
	\item $\Theta$ is locally pointwise inner for $\varphi$.
	
	\item $\Theta$ is inner on $M_\varphi$.
	
	\item $\widetilde \Theta$ is inner on $\rd_{\Lambda,\varphi}(M)$.

	\item There exist $u\in \mathcal U(M_\varphi)$ and $g\in G$ such that $\Theta = \Ad(u)\circ \sigma_g^{\Lambda, \varphi}$.
\end{enumerate}
\end{thm}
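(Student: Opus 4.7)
The plan is to prove the cycle $(\rm iv) \Rightarrow (\rm i) \Rightarrow (\rm ii) \Rightarrow (\rm iv)$ together with the equivalence $(\rm iv) \Leftrightarrow (\rm iii)$. The implication $(\rm iv) \Rightarrow (\rm i)$ is immediate: both $\Ad(u)$ with $u \in \mathcal U(M_\varphi)$ and $\sigma_g^{\Lambda,\varphi}$ satisfy $\Theta(\varphi_h) \sim_M \varphi_h$ for every positive invertible $h \in pM_\varphi p$ with $\varphi(p) < \infty$, via the intertwiners $up$ and $p$ respectively.

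For $(\rm i) \Rightarrow (\rm ii)$, since Proposition \ref{HS-prop} requires $\varphi(1) = \infty$ and $M_\varphi$ of type ${\rm II}_\infty$, I would amplify to $\mathcal M = M \ovt \rB(\ell^2)$ with $\Phi = \varphi \ot \Tr$. In the essential case where $M$ is of type ${\rm III}$, $\Phi$ is an extremal almost periodic faithful normal semifinite weight with $\Phi(1) = \infty$, $\sigma_p(\Delta_\Phi) = \Lambda$, and $\mathcal M_\Phi = M_\varphi \ovt \rB(\ell^2)$ of type ${\rm II}_\infty$. Lemma \ref{lem-amplification} propagates local pointwise innerness to $\Theta \ot \id$, and Proposition \ref{HS-prop} yields $(\Theta \ot \id)|_{\mathcal M_\Phi} = \Ad(U)$ for some $U \in \mathcal U(\mathcal M_\Phi)$. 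Since $\Theta \ot \id$ fixes $1 \ot \rB(\ell^2)$ pointwise, $U \in (1 \ot \rB(\ell^2))' \cap (M_\varphi \ovt \rB(\ell^2)) = M_\varphi \ot \C$, so $U = u \ot 1$ with $u \in \mathcal U(M_\varphi)$, giving $\Theta|_{M_\varphi} = \Ad(u)$.

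For $(\rm ii) \Rightarrow (\rm iv)$, I continue to work in the amplification $\mathcal M$ to access unitary eigenvectors. After composing with $\Ad(u^*)$, I may assume $\Theta|_{M_\varphi} = \id$, so $\Theta \ot \id$ fixes $\mathcal M_\Phi$ pointwise and commutes with $\sigma^{\Lambda,\Phi}$. By Lemma \ref{lem-eigenvector-unitary}, for each $\lambda \in \Lambda$ there is a unitary $\lambda$-eigenvector $\mathcal V_\lambda \in \mathcal M$, and since $\mathcal V_\lambda^* (\Theta \ot \id)(\mathcal V_\lambda)$ is a $1$-eigenvector, we may write $(\Theta \ot \id)(\mathcal V_\lambda) = \mathcal V_\lambda \mathcal W_\lambda$ for some $\mathcal W_\lambda \in \mathcal U(\mathcal M_\Phi)$. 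Applying $\Theta \ot \id$ to the identity $a \mathcal V_\lambda = \mathcal V_\lambda(\mathcal V_\lambda^* a \mathcal V_\lambda)$ for $a \in \mathcal M_\Phi$ forces $\mathcal V_\lambda \mathcal W_\lambda \mathcal V_\lambda^* \in \mathcal M_\Phi' \cap \mathcal M_\Phi = \C$, whence $\mathcal W_\lambda = c(\lambda) \, 1$ with $c(\lambda) \in \T$. A multiplicativity check on products $\mathcal V_\lambda \mathcal V_\mu$ shows that $c : \Lambda \to \T$ is a character, determining $g \in G = \widehat \Lambda$ with $\Theta \ot \id = \sigma_g^{\Lambda, \Phi} = \sigma_g^{\Lambda,\varphi} \ot \id$; restricting to $M \ot 1$ gives $\Theta = \sigma_g^{\Lambda,\varphi}$, hence (iv).

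For $(\rm iv) \Leftrightarrow (\rm iii)$, the forward direction is direct since $u \in M_\varphi$ commutes with every $\lambda_h$, yielding $\widetilde \Theta = \Ad(u\lambda_g)$ on $\rd_{\Lambda,\varphi}(M)$. Conversely, let $\widetilde \Theta = \Ad(V)$ with $V \in \mathcal U(\rd_{\Lambda,\varphi}(M))$. Since $\widetilde \Theta$ commutes with the dual action $\widehat \sigma^{\Lambda,\varphi} : \Lambda \curvearrowright \rd_{\Lambda,\varphi}(M)$ and the discrete core is a factor, $\widehat \sigma^{\Lambda,\varphi}_\lambda(V) V^* \in \mathcal Z(\rd_{\Lambda,\varphi}(M)) = \C$ for every $\lambda$, so $V$ is an eigenvector for some character $c$ of the dual action. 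The $c$-eigenspace of $\widehat \sigma^{\Lambda,\varphi}$ inside $M \rtimes_{\sigma^{\Lambda,\varphi}} G$ is precisely $M \cdot \lambda_g$ for the unique $g \in G$ whose Pontryagin pairing with $\Lambda$ equals $c$, so $V = u\lambda_g$ with $u \in \mathcal U(M)$, giving $\Theta = \widetilde \Theta|_M = \Ad(u) \circ \sigma_g^{\Lambda,\varphi}$; the condition $\Theta(\varphi) = \varphi$ then forces $u \in \mathcal U(M_\varphi)$ by Takesaki's characterization of centralizers. The main obstacle I foresee is the amplification step in $(\rm i) \Rightarrow (\rm ii)$: verifying that $(\mathcal M, \Phi)$ fits the hypotheses of Proposition \ref{HS-prop} and descending innerness cleanly from $\mathcal M_\Phi$ back to $M_\varphi$; the semifinite case of $M$ (where $\Lambda = \{1\}$) would also need to be handled separately, for instance via the Haagerup--St\o rmer result for semifinite algebras in \cite{HS87}.
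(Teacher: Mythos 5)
Your proposal is correct, and three of its four legs coincide with the paper's proof: $(\rm iv)\Rightarrow(\rm i)$ is the same direct computation, $(\rm i)\Rightarrow(\rm ii)$ is exactly the paper's route (amplify to $M\ovt\rB(\ell^2)$, apply Lemma \ref{lem-amplification} and Proposition \ref{HS-prop}, then descend from $M_\varphi\ovt\rB(\ell^2)$ to $M_\varphi$ via the relative commutant of $1\otimes\rB(\ell^2)$ --- a detail the paper leaves implicit), and your $(\rm iii)\Leftrightarrow(\rm iv)$ is the paper's $(\rm iii)\Rightarrow(\rm iv)$ argument plus its easy converse. Where you genuinely diverge is $(\rm ii)\Rightarrow(\rm iv)$: the paper passes through the discrete core, showing $(\rm ii)\Rightarrow(\rm iii)$ by compressing $\rd_{\Lambda,\varphi}(M)$ by $1\otimes e_\lambda$ and invoking factoriality to upgrade ``not properly outer'' to inner, and only then extracts the character from the cocycle $\lambda\mapsto u^*\widehat\sigma^{\Lambda,\varphi}_\lambda(u)$. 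You instead work directly in the amplification: after normalizing $\Theta|_{M_\varphi}=\id$, you read off a character $c(\lambda)$ from the action of $\Theta\otimes\id$ on unitary eigenvectors (supplied by Lemma \ref{lem-eigenvector-unitary}, and the amplification is genuinely needed here since a state admits no unitary $\lambda$-eigenvectors for $\lambda\neq 1$). This is the Pontryagin-dual mirror of the paper's computation; it is slightly more self-contained (no crossed product needed to reach $(\rm iv)$, the discrete core entering only for the side equivalence with $(\rm iii)$), at the cost of having to justify well-definedness and multiplicativity of $c$ and the identification of $\Theta\otimes\id$ with $\sigma_g^{\Lambda,\Phi}$ on the dense span of eigenvectors --- all of which you do, or sketch, correctly. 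Your closing caveat about the case $\Lambda=\{1\}$ (where $M$ is semifinite and Proposition \ref{HS-prop} does not apply) is a fair observation; the paper's proof silently assumes the type $\rm III$ setting as well, which is the only case used downstream.
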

\begin{proof}
	$(\rm i) \Rightarrow (\rm ii)$ By Lemma \ref{lem-amplification}, $\Theta \otimes \id \in \Aut(M \ovt \rB(\ell^2))$ is locally pointwise inner for $\varphi \otimes \Tr$. By Proposition \ref{HS-prop}, $\Theta \otimes \id$ is inner on $(M \ovt \rB(\ell^2))_{\varphi \otimes \Tr} = M_\varphi \ovt \rB(\ell^2)$. It follows that $\Theta$ is inner on $M_\varphi$.
	
	$(\rm ii) \Rightarrow (\rm iii)$ As we have explained in the preliminaries, there is an embedding
	$$ M_\varphi \otimes \C1 \subset   (M\ovt \rB(\ell^2(\Lambda)))_{\varphi\otimes \Tr(h\, \cdot \, )}=\rd_{\Lambda, \varphi}(M)$$
and any projection $p=1\otimes e_\lambda$ for $\lambda\in \Lambda$ satisfies $p \rd_{\Lambda, \varphi}(M) p = M_\varphi\otimes \C e_\lambda \cong M_\varphi$.
Observe that $\widetilde{\Theta}(p)=p$ and that $\widetilde{\Theta} |_{p \rd_{\Lambda, \varphi}(M) p} $ coincides with $\Theta|_{M_\varphi}$ via the above isomorphism. 
Then since $\Theta|_{M_\varphi}$ is inner, $\widetilde{\Theta}|_{p \rd_{\Lambda, \varphi}(M)p}$ is inner as well. This shows that $\widetilde{\Theta}$ is not properly outer. Since $\rd_{\Lambda, \varphi}(M)$ is a factor, $\widetilde{\Theta}$ is inner on $\rd_{\Lambda, \varphi}(M)$. 
	
	$(\rm iii) \Rightarrow (\rm iv)$ There exists $u\in \mathcal U(\rd_{\Lambda, \varphi}(M))$ such that $\widetilde{\Theta} = \Ad(u)$. Since $\widetilde{\Theta}$ commutes with the dual action $\widehat \sigma^{\Lambda, \varphi}$, for every $\lambda\in \Lambda$ and every $x\in \rd_{\Lambda, \varphi}(M)$, we have
	$$ \widehat{\sigma}^{\Lambda,\varphi}_\lambda(uxu^*)= \widehat{\sigma}^{\Lambda,\varphi}_\lambda\circ \widetilde{\Theta}(x)= \widetilde{\Theta}\circ \widehat{\sigma}^{\Lambda,\varphi}_\lambda(x) = u \widehat{\sigma}^{\Lambda,\varphi}_\lambda(x)u^*$$
which further implies 
	$$ v_\lambda=u^*\widehat{\sigma}^{\Lambda,\varphi}_\lambda(u) \in \rd_{\Lambda, \varphi}(M)'\cap \rd_{\Lambda, \varphi}(M)=\C 1.$$
Observe that $\Lambda \to \C : \lambda \mapsto v_\lambda$ is a character (indeed it is a cocycle for $\widehat{\sigma}^{\Lambda,\varphi}$ but $\widehat{\sigma}^{\Lambda,\varphi}$ is trivial on $\C$). Thus there is $g\in \widehat{\Lambda} = G$ such that $\langle g,\lambda\rangle = v_\lambda$ for every $\lambda \in \Lambda$. This means
	$$\forall \lambda\in \Lambda, \quad \widehat{\sigma}^{\Lambda,\varphi}_\lambda(u)=\langle g,\lambda\rangle u.$$
Observe that the group element $\lambda_g\in \rd_{\Lambda, \varphi}(M)$ satisfies the same condition as $u$, hence $\lambda_g u^*$ is invariant under the dual action, which implies $\lambda_g u^*\in M$. We get $u=v\lambda_g$ for some $v\in M$, hence 
	$$\Theta =\Ad(u)|_M= \Ad(v\lambda_g )|_M = \Ad(v)\circ \widehat{\sigma}^{\Lambda,\varphi}_g.$$
Since $\varphi = \Theta(\varphi) = v\varphi v^*$, we moreover have $v \in \mathcal U(M_\varphi)$. 

	$(\rm iv) \Rightarrow (\rm i)$ Let $p \in M_\varphi$ be any nonzero projection and $h\in p M_\varphi p$ any positive invertible element. Since $\sigma^{\Lambda,\varphi}(h)=h$, we have $\sigma^{\Lambda,\varphi} (\varphi_h) = \varphi_h$ and so
	$$ \Theta(\varphi_h) = (\Ad(u) \circ \sigma_g^{\Lambda, \varphi}) (\varphi_h) =\Ad(u)(\varphi_h) = u \varphi_h u^*.$$
By comparing support projections, we have $\Theta(p) = upu^*$. Then letting $v =up =\Theta(p)u$, we obtain $\Theta(\varphi_h)v = v\varphi_h $. Thus, $\Theta$ is locally pointwise inner for $\varphi$.
\end{proof}

\section{Intertwining stable inclusions}

In this section, we investigate intertwining stable inclusions of von Neumann algebras. We introduce the following terminology.

\begin{df}\label{df-stability}
Let $N \subset M$ be any inclusion of von Neumann algebras with faithful normal conditional expectation $\rE : M \to N$. Let $\varphi \in M_\ast$ be any faithful normal state such that $\varphi \circ \rE = \varphi$. We say that $(M, N, \varphi)$ is {\bf intertwining stable} if the following condition is satisfied: 
\begin{itemize}
\item For every $\Theta \in \Aut(M)$ such that $\rE \circ \Theta = \Theta \circ \rE$ and $\Theta(\varphi) = \varphi$ and for every $\psi \in (N_\ast)^+$, if $\Theta (\psi \circ \rE) \sim_M \psi \circ \rE$, then $\Theta_N(\psi) \sim_N \psi$ where $\Theta_N = \Theta|_N$.
\end{itemize}
\end{df}

The notion of intertwining stable inclusion behaves well with respect to locally pointwise inner automorphisms.

\begin{prop}\label{prop-stable}
Let $N \subset M$ be any inclusion of von Neumann algebras with faithful normal conditional expectation $\rE : M \to N$. Let $\varphi \in M_\ast$ be any faithful normal state such that $\varphi \circ \rE = \varphi$. Assume that $(M, N, \varphi)$ is intertwining stable. Let $\Theta \in \Aut(M)$ be any automorphism such that $\rE \circ \Theta = \Theta \circ \rE$ and $\Theta(\varphi) = u\varphi u^*$ for some $u \in \mathcal U(N)$. Set $\Theta_N = \Theta|_N$. Let $\psi$ be any strictly semifinite weight on $N$.

If $\Theta$ is locally pointwise inner for $\psi \circ \rE$, then $\Theta_N$ is locally pointwise inner for $\psi$.
\end{prop}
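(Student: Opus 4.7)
The plan is to verify directly the two defining conditions of local pointwise innerness for $\Theta_N$ and $\psi$ (Definition \ref{def-pointinner}): namely $\Theta_N(\psi)=\psi$, and $\Theta_N(\psi_h)\sim_N \psi_h$ for every nonzero projection $p\in N_\psi$ with $\psi(p)<\infty$ and every positive invertible $h\in pN_\psi p$. The strategy is to translate each such condition into its counterpart for $\Theta$ on $M$ with respect to $\psi\circ \rE$, use local pointwise innerness of $\Theta$ to obtain the intertwining in $M$, and then descend to $N$ via intertwining stability, after first absorbing the unitary $u$ into $\Theta$ so that the resulting automorphism fixes $\varphi$.

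The first condition is immediate: local pointwise innerness of $\Theta$ for $\psi\circ\rE$ gives $\Theta(\psi\circ\rE)=\psi\circ\rE$, and together with $\rE\circ\Theta=\Theta\circ\rE$ (which forces $\Theta(N)=N$), restricting to $N$ yields $\Theta_N(\psi)=\psi$. For the second, fix $p\in N_\psi$ with $\psi(p)<\infty$ and a positive invertible $h\in pN_\psi p$. Using the standard identity $\sigma^{\psi\circ\rE}|_N=\sigma^\psi$ (which follows from $\rE\circ\sigma^{\psi\circ\rE}_t=\sigma^\psi_t\circ\rE$), one gets $p\in M_{\psi\circ\rE}$ with $h$ still positive invertible in $pM_{\psi\circ\rE}p$ and $(\psi\circ\rE)(p)=\psi(p)<\infty$. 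The hypothesis then delivers $\Theta((\psi\circ\rE)_h)\sim_M(\psi\circ\rE)_h$. Since $h\in N$ and $\rE$ is $N$-bimodular, one checks $(\psi\circ\rE)_h=\psi_h\circ\rE$, so
\[
\Theta(\psi_h\circ\rE)\sim_M \psi_h\circ\rE.
\]

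To descend this intertwining from $M$ to $N$ I would invoke intertwining stability of $(M,N,\varphi)$, but Definition \ref{df-stability} requires the automorphism to fix $\varphi$. The fix is to set $\Theta'=\Ad(u^*)\circ\Theta$: since $u\in\mathcal U(N)$ and $\rE$ is $N$-bimodular, $\Theta'$ satisfies $\rE\circ\Theta'=\Theta'\circ\rE$, and by construction $\Theta'(\varphi)=\varphi$. A direct computation yields $\Theta'(\psi_h\circ\rE)=u^*\Theta(\psi_h\circ\rE)u$, and the unitary invariance of $\sim_M$ noted directly after its definition gives $\Theta'(\psi_h\circ\rE)\sim_M\psi_h\circ\rE$. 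Because $\psi_h(1)=\psi(h)\le\|h\|\psi(p)<\infty$, we have $\psi_h\in(N_\ast)^+$, so intertwining stability applied to $\Theta'$ and $\psi_h$ produces $\Theta'_N(\psi_h)\sim_N\psi_h$, i.e.\ $u^*\Theta_N(\psi_h)u\sim_N\psi_h$; a final appeal to unitary invariance with $u\in\mathcal U(N)$ then gives $\Theta_N(\psi_h)\sim_N\psi_h$. The argument is largely bookkeeping; the two steps deserving care are the identity $(\psi\circ\rE)_h=\psi_h\circ\rE$ for $h\in N$ (resting on the $N$-bimodularity of $\rE$) and the consistent handling of $u$ when entering and exiting the hypothesis of Definition \ref{df-stability}.
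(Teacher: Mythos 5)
Your proposal is correct and follows essentially the same route as the paper: reduce to the $\varphi$-preserving automorphism $\Ad(u^*)\circ\Theta$, use $N_\psi\subset M_{\psi\circ\rE}$ and $(\psi\circ\rE)_h=\psi_h\circ\rE$ to transfer the intertwining to $M$, apply intertwining stability, and remove $u$ by unitary invariance of $\sim_N$. The only difference is that you also spell out the (trivial) verification of $\Theta_N(\psi)=\psi$, which the paper leaves implicit.
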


\begin{proof}
Assume that $\Theta$ is locally pointwise inner for $\psi \circ \rE$. To show that $\Theta_N$ is locally pointwise inner for $\psi$, take any nonzero projection $p \in N_\psi$ such that $\psi(p) < \infty$ and any positive invertible element $h \in p N_\psi p$. Then since $N_\psi \subset M_{\psi\circ E}$ and $(\psi \circ \rE)_h = \psi_h \circ \rE$, the assumption on $\Theta $ implies  $\Theta(\psi_h \circ \rE) \sim_M \psi_h \circ \rE$. Set $\Theta^u = \Ad(u^*) \circ \Theta$ and $\Theta_N^u = \Theta^u|_N$. Then we have $\Theta^u(\psi_h \circ \rE) \sim_M \psi_h \circ \rE$. Since $(M, N, \varphi)$ is intertwining stable and since $\Theta^u(\varphi) = \varphi$, we have $\Theta^u_N(\psi_h) \sim_N \psi_h$. This further implies that $\Theta_N(\psi_h) \sim_N \psi_h$. Thus, $\Theta_N$ is locally pointwise inner for $\psi$.
\end{proof}

We observe that inclusions arising from tensor product von Neumann algebras are intertwining stable.

\begin{prop}\label{prop-stability-tensor}
Let $M=N\ovt P$ be any tensor product von Neumann algebra. Let $\varphi_N \in N_\ast$ and $\varphi_P \in P_\ast$ be any faithful normal states and write $\varphi_M = \varphi_N \otimes \varphi_P \in M_\ast$. Then $(M, N, \varphi_M)$ is intertwining stable.
\end{prop}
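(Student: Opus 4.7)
My plan is to prove the intertwining stability by contrapositive, using the analytic characterization of Theorem \ref{thm-intertwining}. The canonical conditional expectation here is $\rE = \id_N \otimes \varphi_P$. First I would record that $\rE \circ \Theta = \Theta \circ \rE$ forces $\Theta(N) = N$, so that $\Theta_N := \Theta|_N \in \Aut(N)$ is well defined, and a direct calculation yields the key identity $\Theta(\psi \circ \rE) = \Theta_N(\psi) \circ \rE$.

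Assuming $\Theta_N(\psi) \not\sim_N \psi$, I would choose $\Phi \in \mathcal E_{N, \psi}$ and $\Psi \in \mathcal E_{N, \Theta_N(\psi)}$ and write $w_t = [\rD \Psi : \rD \Phi]_t$. Theorem \ref{thm-intertwining}(iii), applied inside $N$, then provides a net $(t_j)_{j \in J}$ in $\R$ such that
$$\lim_j \, \psi\bigl(\sigma_{t_j}^\Phi(b^*) \, w_{t_j}^* \, s(\Theta_N(\psi)) \, a\bigr) = 0$$
for all $a, b \in N$. I would then lift this to $M$ by setting $\widetilde\Phi = \Phi \otimes \varphi_P$ and $\widetilde\Psi = \Psi \otimes \varphi_P$. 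Using that $s(\psi \circ \rE) = s(\psi) \otimes 1$ lies in $N_\Phi \ovt \C 1 \subset M_{\widetilde\Phi}$, one checks that $\widetilde\Phi \in \mathcal E_{M, \psi \circ \rE}$, and similarly $\widetilde\Psi \in \mathcal E_{M, \Theta_N(\psi) \circ \rE}$. The crucial point is that the modular data factor tensorially:
$$\sigma_t^{\widetilde\Phi} = \sigma_t^\Phi \otimes \sigma_t^{\varphi_P}, \qquad [\rD \widetilde\Psi : \rD \widetilde\Phi]_t = w_t \otimes 1.$$

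Finally, I would verify criterion (ii) of Theorem \ref{thm-intertwining} for $M$ against the strongly dense subset $\mathcal S \subset M$ consisting of finite sums of elementary tensors $x \otimes y$ with $x \in N$ and $y \in P$ (strong density follows from Kaplansky). For $a = x \otimes y$ and $b = x' \otimes y'$, the test quantity factorizes as
$$\psi\bigl(\sigma_{t_j}^\Phi(x'^*) w_{t_j}^* s(\Theta_N(\psi)) x\bigr) \cdot \varphi_P\bigl(\sigma_{t_j}^{\varphi_P}(y'^*) y\bigr).$$
The first factor tends to $0$ by our choice of $(t_j)$, while the second is uniformly bounded by $\|y'\|\,\|y\|$ since $\varphi_P$ is a state and $\sigma_t^{\varphi_P}$ is isometric. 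Bilinearity propagates the vanishing to all pairs in $\mathcal S$, and Theorem \ref{thm-intertwining}(ii) then forces $\Theta(\psi \circ \rE) = \Theta_N(\psi) \circ \rE \not\sim_M \psi \circ \rE$, which is the desired contrapositive.

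I do not foresee any serious obstacle: once Theorem \ref{thm-intertwining} is available, the argument is purely structural, because product states have factorizing modular flows and Radon--Nikodym cocycles, exactly matching the multiplicative shape of the analytic criterion. No additional hypothesis on $N$ or $P$ (such as factoriality or fullness) is required, and the hypothesis $\Theta(\varphi_M) = \varphi_M$ enters only implicitly through $\rE \circ \Theta = \Theta \circ \rE$.
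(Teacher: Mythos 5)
Your argument is correct and is essentially the paper's own proof: the paper likewise reduces to showing that $\psi_1 \not\sim_N \psi_2$ implies $\psi_1 \circ \rE \not\sim_M \psi_2 \circ \rE$ by lifting the net from Theorem \ref{thm-intertwining} via the tensor factorization of the modular flow and the Connes cocycle, and by observing that the test quantity on elementary tensors splits into an $N$-factor tending to $0$ times a bounded $P$-factor. The only cosmetic difference is that the paper isolates this as a claim for arbitrary pairs $\psi_1,\psi_2\in (N_\ast)^+$ rather than specializing to $\psi$ and $\Theta_N(\psi)$.
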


\begin{proof}
Denote by $\rE=\id_N\otimes \varphi_P : M \to N$ the faithful normal conditional expectation associated with $\varphi_P \in P_\ast$. The fact that $(M, N, \varphi_M)$ is intertwining stable easily follows from the following general result.

\begin{claim}
For all $\psi_1, \psi_2 \in (N_\ast)^+$, if $\psi_1 \not\sim_N \psi_2$, then $\psi_1 \circ \rE \not\sim_M \psi_2 \circ \rE$.
\end{claim}

Let $\psi_1, \psi_2 \in (N_\ast)^+$ and assume that $\psi_1 \not\sim_N \psi_2$. For every $i \in \{1, 2\}$, $\psi_i \circ \rE = \psi_i \otimes \varphi_P$. For every $i \in \{1, 2\}$, choose $\Psi_i \in \mathcal E_{N, \psi_i}$ and note that $\Psi_i \otimes \varphi_P = \Psi_i \circ \rE \in \mathcal E_{M, \psi_i \circ \rE}$. For every $t \in \R$, set $w_t = [\rD \Psi_2 : \rD \Psi_1]_t \in \mathcal U(N)$. By Theorem \ref{thm-intertwining}, there exists a net $(t_j)_{j \in J}$ in $\R$ such that
\begin{equation}\label{eq-convergence1}
\forall a, b \in N, \quad \lim_{j \to \infty} \psi_1(\sigma_{t_j}^{\Psi_1}(b^*) w_{t_j}^* s(\psi_2)a) = 0.
\end{equation}
Let $a, b \in M$ and $x, y \in P$. Then \eqref{eq-convergence1} implies that
\begin{align*}
(\psi_1 \otimes \varphi_P)(\sigma_{t_j}^{\Psi_1 \otimes \varphi_P}(b^* \otimes y^*)(w_{t_j}^*s(\psi_2) \otimes 1)(a \otimes x)) &=  \psi_1(\sigma_{t_j}^{\Psi_1}(b^*) w_{t_j}^* s(\psi_2)a) \, \varphi_P(\sigma_{t_j}^{\varphi_P}(y^*)x) \\
&\to 0 \quad \text{as} \quad j \to \infty.
\end{align*}
This implies that $\psi_1 \otimes \varphi_P \not\sim_M \psi_2 \otimes \varphi_P$. This proves the claim and the proposition.
\end{proof}

Next, we prove that inclusions arising from free product von Neumann algebras are intertwining stable. Whenever $(N, \varphi_N)$ and $(P, \varphi_P)$ are von Neumann algebras endowed with faithful normal states, we denote by $(M, \varphi_M) = (N, \varphi_N) \ast (P, \varphi_P)$ the corresponding free product von Neumann algebra \cite{Vo85, Ue98}.

\begin{prop}\label{prop-stability-free}
Let $(M, \varphi_M) = (N, \varphi_N) \ast (P, \varphi_P)$ be any free product von Neumann algebra. Assume that $N_{\varphi_N}$ is a factor. Then $(M, N, \varphi)$ is intertwining stable.
\end{prop}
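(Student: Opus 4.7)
The approach mirrors that of Proposition~\ref{prop-stability-tensor}: the plan is to establish the central claim that, for any $\psi_1, \psi_2 \in (N_\ast)^+$, $\psi_1 \not\sim_N \psi_2$ implies $\psi_1 \circ E_N \not\sim_M \psi_2 \circ E_N$, where $E_N \colon M \to N$ denotes the free product conditional expectation. Granting this claim, intertwining stability of $(M, N, \varphi_M)$ will follow exactly as in the tensor case, because any $\Theta \in \Aut(M)$ commuting with $E_N$ preserves $N$ and satisfies $\Theta(\psi \circ E_N) = \Theta_N(\psi) \circ E_N$; the contrapositive of the claim then yields the desired implication.

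To prove the claim, I would fix faithful extensions $\Psi_i \in \mathcal{E}_{N, \psi_i}$ and set $w_t := [\rD\Psi_2 : \rD\Psi_1]_t$ and $u_t := [\rD\Psi_1 : \rD\varphi_N]_t$ in $\mathcal{U}(N)$. Using that $E_N$ commutes with $\sigma^{\varphi_M}$ (as $\varphi_N \circ E_N = \varphi_M$), one verifies $\Psi_i \circ E_N \in \mathcal{E}_{M, \psi_i \circ E_N}$, $s(\psi_i \circ E_N) = s(\psi_i) \in N$, $[\rD(\Psi_2 \circ E_N) : \rD(\Psi_1 \circ E_N)]_t = w_t$, and $\sigma_t^{\Psi_1 \circ E_N} = \Ad(u_t) \circ \sigma_t^{\varphi_M}$. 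The plan is then to verify Theorem~\ref{thm-intertwining}(ii) in $M$ on the $\ast$-strongly dense subset $\mathcal{S} \subset M$ spanned by $N$ together with reduced alternating words $a_0 p_1 a_1 \cdots p_k a_k$ (with $k \geq 1$, $a_0, a_k \in N$, $a_i \in N^\circ := \ker \varphi_N$ for $1 \leq i \leq k-1$, and $p_j \in P^\circ := \ker \varphi_P$); freeness makes $E_N$ vanish on each such reduced word with $k \geq 1$. For $a, b \in \mathcal{S}$, $N$-bimodularity rewrites the target expression as
$$(\psi_1 \circ E_N)\bigl(\sigma_t^{\Psi_1 \circ E_N}(b^*) w_t^* s(\psi_2) a\bigr) = \psi_1\!\Bigl(u_t \, E_N\!\bigl(\sigma_t^{\varphi_M}(b^*)\, u_t^* w_t^* s(\psi_2)\, a\bigr)\Bigr),$$
which I would analyze case-by-case. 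For $a, b \in N$ it reduces to the $N$-quantity and tends to zero along the net $(t_j)$ from Theorem~\ref{thm-intertwining} applied in $N$. For $a, b$ alternating with mismatched $P$-letter counts, freeness forces the $E_N$-expectation to vanish identically. In the matched case, iterated pairwise decomposition of the product yields a scalar $\Sigma_t^{a,b}$ times $\sigma_t^{\varphi_N}(b_l^*) a_k$, where $\Sigma_t^{a,b}$ is a product of uniformly bounded contractions $\varphi_P(\sigma_t^{\varphi_P}(q_j^*) p_j)$ and $\varphi_N(\sigma_t^{\varphi_N}(b_i^*) a_i)$ together with the critical factor $\varphi_N(\sigma_t^{\varphi_N}(b_0^*) \omega_t^* s(\psi_2) a_0)$, where $\omega_t := w_t u_t = [\rD\Psi_2 : \rD\varphi_N]_t$.

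The main obstacle will be the matched case, where the factoriality of $N_{\varphi_N}$ enters essentially. By the contrapositive of Lemma~\ref{lem-transitivity} applied at $\varphi_N$, $\psi_1 \not\sim_N \psi_2$ forces $\psi_1 \not\sim_N \varphi_N$ or $\psi_2 \not\sim_N \varphi_N$; after possibly swapping $\psi_1 \leftrightarrow \psi_2$ (permitted by the symmetry of $\sim$), I may assume $\psi_2 \not\sim_N \varphi_N$. Theorem~\ref{thm-intertwining} applied to the pair $(\varphi_N, \psi_2)$ in $N$ yields a second net along which $\varphi_N(\sigma_t^{\varphi_N}(b^*) \omega_t^* s(\psi_2) a) \to 0$ for every $a, b \in N$. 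Since weak mixing of unitary $\R$-representations is stable under finite direct sums, a common refining net exists along which both convergences hold simultaneously; along such a net, the matched-case critical factor tends to zero while the prefactor $\psi_1(\sigma_t^{\Psi_1}(b_l^*) u_t a_k)$ remains uniformly bounded. Combining the three cases, Theorem~\ref{thm-intertwining}(ii) applied to $\mathcal{S}$ will yield $\psi_1 \circ E_N \not\sim_M \psi_2 \circ E_N$, establishing the claim and hence the proposition.
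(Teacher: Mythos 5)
Your proposal is correct and follows essentially the same route as the paper's proof: reduce via Lemma \ref{lem-transitivity} (factoriality of $N_{\varphi_N}$) to a non-intertwining relation against $\varphi_N$, produce a single net along which both vanishing conditions hold by a direct-sum/weak-mixing argument, and use freeness of $\rE$ on reduced alternating words to isolate the vanishing central factor $\varphi_N(\sigma_t^{\varphi_N}(b_0^*)\omega_t^* s(\psi_2)a_0)$ while the remaining factors stay uniformly bounded. The only real difference is that you prove the stronger symmetric claim for arbitrary $\psi_1,\psi_2\in (N_\ast)^+$ (using the symmetry of $\sim$ to arrange $\psi_2\not\sim_N\varphi_N$), whereas the paper treats only the pair $(\psi,\Theta_N(\psi))$ and exploits $\Theta_N(\varphi_N)=\varphi_N$ instead of the swap; the two devices are equivalent here.
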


\begin{proof}
Denote by $\rE : M \to N$ the unique conditional expectation such that $\varphi_N \circ \rE = \varphi_M$. Let $\Theta \in \Aut(M)$ be any automorphism such that $\rE \circ \Theta = \Theta \circ \rE$ and $\Theta(\varphi) = \varphi$. Set $\Theta_N = \Theta|_N$. 

Let $\psi \in (N_\ast)^+$ and assume that $\Theta_N(\psi) \not\sim_N \psi$. We claim that $\psi \not\sim_N \varphi_N$. Indeed, otherwise we have $\psi \sim_N \varphi_N$ and so $\Theta_N(\psi) \sim_N \Theta_N(\varphi_N)$. Since $\Theta_N(\varphi_N) = \varphi_N$ and since $N_{\varphi_N}$ is a factor, Lemma \ref{lem-transitivity} implies that $\Theta_N(\psi) \sim_N \psi$, a contradiction. We have $\psi \not\sim_N \varphi_N$ and so $\Theta(\psi) \not\sim_N \varphi_N$. This further implies that $ \Theta_N(\psi) \oplus \Theta_N(\psi) \not\sim_{N \oplus N} \psi \oplus \varphi_N$. 

Choose $\Psi \in \mathcal E_{N, \psi}$.  For every $t \in \R$, set $w_t = [\rD\Psi:\rD \varphi_N]_t \in \mathcal U(N)$ and note that $ [\rD \psi :\rD \varphi_N]_t = s(\psi) [\rD \Psi : \rD \varphi_N]_t = s(\psi) w_t = w_t \sigma_t^{\varphi_N}(s(\psi)) \in s(\psi) N \sigma_t^{\varphi_N}(s(\psi))$. Using the chain rule (see \cite[Theorem VIII.3.7]{Ta03}) and since $\Theta_N(\varphi_N) = \varphi_N$, we have
\begin{align*}
\forall t \in \R, \quad [\rD \Theta_N(\Psi) :  \rD\Psi]_t &=  [\rD \Theta_N(\Psi) : \rD \Theta_N(\varphi_N)]_t  \, ([ \rD \Psi : \rD \varphi_N]_t)^*  \\
&= \Theta_N([\rD \Psi : \rD \varphi_N]_t) w_t^* \\
&= \Theta_N(w_t) w_t^*.
\end{align*}
Note that $ \Theta_N(w_t) w_t^* = [\rD \Theta(\Psi \circ \rE) :  \rD \, (\Psi \circ \rE)]_t $ for every $t \in \R$.

Since $ \Theta_N(\psi) \oplus \Theta_N(\psi) \not\sim_{N \oplus N} \psi \oplus \varphi_N$, Theorem \ref{thm-intertwining} implies that there exists a net $(t_j)_{j \in J}$ in $\R$ such that
\begin{align}\label{eq-convergence2}
\forall a, b \in N, \quad \lim_{j \to \infty} \varphi_N(\sigma_{t_j}^{\varphi_N}(b^*) \Theta_N(w_{t_j}^*s(\psi)) a) &= 0 \\  \label{eq-convergence3}
\forall a, b \in N, \quad \lim_{j \to \infty} \psi(\sigma_{t_j}^\Psi(b^*) w_{t_j}\Theta_N(w_{t_j}^* s(\psi))a)&= 0.
\end{align}

Set $P^\circ = \ker(\varphi) \cap P$ and $N^\circ = \ker(\varphi) \cap N$. For every $k \in \N$, denote by $\mathcal W_k$ the set of words of the form $w = a_1 \cdots a_{2k + 1}$, with alternating letters in $N$ and $P$, where $a_1, a_{2k + 1} \in N$ and $a_2 \in P^\circ, a_3 \in N^\circ, \dots, a_{2k - 1} \in N^\circ, a_{2k} \in P^\circ$. The linear span of the subsets $\mathcal W_k$ for $k \in \N$ is $\ast$-strongly dense in $M$. For every $k \geq 1$, we have $\rE(\mathcal W_k) = 0$.

Let $ k, \ell \in \N$. Let $a_1 \cdots a_{2k + 1} \in \mathcal W_k$ and $b_1 \cdots b_{2\ell + 1} \in \mathcal W_\ell$. Set
$$\alpha_j =\psi \left( \rE \left(\sigma_{t_j}^{\Psi \circ \rE}((b_1 \cdots b_{2\ell + 1})^* ) w_{t_j}\Theta_N(w_{t_j}^* s(\psi)) a_1 \cdots a_{2k + 1} \right) \right).$$
If $k = \ell = 0$, by \eqref{eq-convergence3}, we have that $ \alpha_j = \psi(\sigma_{t_j}^\Psi(b_1^*) w_{t_j}\Theta_N(w_{t_j}^* s(\psi))a_1) \to 0$ as $j \to \infty$. Next, we assume that $k + \ell \geq 1$. We have
\begin{align*}
\alpha_j &= \psi \left( \rE \left(\sigma_{t_j}^{\Psi \circ \rE}((b_1 \cdots b_{2\ell + 1})^* ) w_{t_j}\Theta_N(w_{t_j}^*s(\psi)) a_1 \cdots a_{2k + 1} \right) \right) \\
&= \psi \left( \rE \left(\sigma_{t_j}^{\Psi \circ \rE}(b_{2\ell + 1}^* \cdots b_{1}^*) w_{t_j}\Theta_N(w_{t_j}^*s(\psi)) a_1 \cdots a_{2k + 1} \right) \right) \\
&= \psi \left( \rE\left(w_{t_j} \sigma_{t_j}^{\varphi_M}(b_{2\ell + 1}^* \cdots b_{1}^*)  \Theta_N(w_{t_j}^*s(\psi)) a_1  \cdots a_{2k + 1} \right) \right) \\
&= \psi \left( \rE \left(w_{t_j} \sigma_{t_j}^{\varphi_N}(b_{2\ell + 1}^*) \cdots \sigma_{t_j}^{\varphi_P}( b_{2}^*) \; \sigma_{t_j}^{\varphi_N}(b_{1}^*) \Theta_N(w_{t_j}^*s(\psi)) a_1 \; a_2 \cdots a_{2k + 1} \right) \right).
\end{align*}
By \eqref{eq-convergence2}, we have $\lim_{j \to \infty} \varphi(\sigma_{t_j}^\varphi(b_{1}^*) \Theta(w_{t_j}^*s(\psi)) a_1) = 0$. Since $k + \ell \geq 1$, we have $\rE(\mathcal W_{k + \ell}) = 0$. Altogether, this implies that $\lim_{j \to \infty} \alpha_j = 0$. Since the linear span of the subsets $\mathcal W_k$ for $k \in \N$ is $\ast$-strongly dense in $M$, Theorem \ref{thm-intertwining} implies that $\Theta(\psi \circ \rE) \not\sim_M \psi \circ \rE$.
\end{proof}

\section{Proofs of the main theorems}

\subsection{Preliminaries on $R_\infty$}

	Denote by $R_\infty$ the Araki--Woods type ${\rm III_1}$ factor. Recall that $R_\infty$ is the unique amenable type ${\rm III}_1$ factor with separable predual \cite{Co75, Co85, Ha85}. Denote by $R$ the unique AFD type ${\rm II}_1$ factor. Then by uniqueness of the amenable type ${\rm III}_1$ factor, we have $R_\infty \cong R_\infty \ovt R$. Firstly, we recall the following description of automorphisms of $R_\infty$. 

\begin{lem}\label{lem-outer-conjugacy}
	Fix an isomorphism $R_\infty = R_\infty \ovt R$ and fix a faithful normal semifinite weight $\psi$ on $R_\infty$. Then for any $\Theta\in \Aut(R_\infty)$, there exist $u\in \mathcal U(R_\infty)$, $\beta\in \Aut(R)$, $\pi \in \Aut(R_\infty\ovt R)$, and $t\in \R$ such that 
	$$\Ad(u)\circ \Theta =\pi\circ( \sigma^\psi_t\otimes \beta)\circ \pi^{-1}.$$
\end{lem}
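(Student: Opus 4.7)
The plan is to reduce the statement to the outer conjugacy classification of automorphisms of $R_\infty$. Using the fixed identification $R_\infty = R_\infty \ovt R$, I view the given $\Theta$ as an automorphism of $R_\infty \ovt R$. Unwinding the conclusion, the identity $\Ad(u)\circ \Theta = \pi\circ(\sigma^\psi_t\otimes \beta)\circ \pi^{-1}$ is exactly the assertion that $\Theta$ and the product automorphism $\sigma^\psi_t \otimes \beta$ are \emph{outer conjugate} in $\Aut(R_\infty \ovt R)$, for an appropriate choice of $t \in \R$ and $\beta \in \Aut(R)$.

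The key input I would invoke is the classical outer conjugacy classification of automorphisms of the injective type ${\rm III}_1$ factor: every $\Theta \in \Aut(R_\infty)$ is outer conjugate to some product automorphism of the form $\sigma^\psi_t \otimes \beta$ on $R_\infty \ovt R$, with $(t,\beta)$ essentially encoding the Connes invariants of $\Theta$. This follows from Connes' analysis of automorphisms of injective factors \cite{Co75}, Haagerup's uniqueness theorem for the amenable type ${\rm III}_1$ factor \cite{Ha85}, and Connes' outer conjugacy classification for automorphisms of the hyperfinite ${\rm II}_1$ factor $R$. The precise version required here is the one used in \cite{HS91} as the structural starting point for {\bf HSC} for $R_\infty$.

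Granting this classification, there exist $t \in \R$, $\beta \in \Aut(R)$, an automorphism $\pi \in \Aut(R_\infty \ovt R)$, and a unitary $v \in \mathcal U(R_\infty \ovt R)$ such that
\[
\pi^{-1} \circ \Theta \circ \pi \;=\; \Ad(v) \circ (\sigma^\psi_t \otimes \beta).
\]
Setting $u = \pi(v)^* \in \mathcal U(R_\infty)$ and conjugating both sides on the left by $\pi$ and on the right by $\pi^{-1}$ gives
\[
\Theta \;=\; \Ad(\pi(v)) \circ \pi \circ (\sigma^\psi_t \otimes \beta) \circ \pi^{-1} \;=\; \Ad(u^*) \circ \pi \circ (\sigma^\psi_t \otimes \beta) \circ \pi^{-1},
\]
which rearranges to the required identity $\Ad(u) \circ \Theta = \pi \circ (\sigma^\psi_t \otimes \beta) \circ \pi^{-1}$.

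The principal obstacle is entirely conceptual rather than technical: one must invoke the outer conjugacy classification for $\Aut(R_\infty)$, which rests on the deep structural theory of injective factors and the uniqueness of the amenable ${\rm III}_1$ factor. Once this is accepted as a black box, the remainder of the proof is a purely formal bookkeeping manipulation translating outer conjugacy into the explicit presentation demanded by the statement; in particular, no new estimates, intertwining arguments, or constructions are needed beyond invoking the classification.
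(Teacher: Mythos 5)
Your proposal is correct and follows essentially the same route as the paper: invoke the outer conjugacy classification of automorphisms of the injective type ${\rm III}_1$ factor from \cite{KST89} (in the form used in \cite[Section 3]{HS91}) as a black box, and then perform the formal rearrangement turning outer conjugacy into the stated identity. The one point you gloss over is that the classification in the literature produces $\sigma^\omega_t \otimes \beta$ for a \emph{dominant} weight $\omega$, not for the arbitrarily fixed $\psi$; to pass to $\psi$ one uses Connes' theorem \cite{Co72} that $\sigma^\psi_t = \Ad([\rD\psi : \rD\omega]_t)\circ\sigma^\omega_t$, the resulting inner perturbation being absorbed into $\Ad(u)$ --- this is the only step the paper's proof records explicitly beyond the citation.
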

\begin{proof}
	Using \cite{KST89} and as explained in \cite[Section 3]{HS91} (before the proof of Theorem 3.1), up to outer conjugacy, any isomorphism on $R_\infty\ovt R$ is of the form $\sigma^\omega_t\otimes \beta$ for some $\beta\in \Aut(R)$ and $t\in \R$, where $\omega$ is a dominant weight on $R_\infty$. Since all modular automorphism groups are cocycle conjugate \cite{Co72}, we obtain the conclusion.
\end{proof}

Secondly, we provide many examples of almost periodic states on $R_\infty$.

\begin{lem}\label{lem-amenable-eigenvalue}
	The following assertions hold true. 
\begin{enumerate}[\rm (i)]
	\item For any countable dense subgroup $\Lambda < \R_{>0}$, there exists an extremal almost periodic faithful normal state  $\varphi_{\Lambda} \in (R_\infty)_\ast$ such that $\Lambda = \sigma_p(\Delta_{\varphi_{\Lambda}})$.

	\item Let $(P, \varphi_P)$ be any factor endowed with an almost periodic faithful normal state. Then for any countable dense subgroup $ \Lambda < \R_{>0}$ that contains $\sigma_p(\Delta_{\varphi_P})$, the state $\psi =\varphi_P\otimes \varphi_{\Lambda}$, where $\varphi_\Lambda$ is as in item $(\rm i)$, is an extremal almost periodic faithful normal state on $P \ovt R_\infty$ that satisfies $\Lambda = \sigma_p(\Delta_{\psi})$.
\end{enumerate}
\end{lem}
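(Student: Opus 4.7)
The plan is to realize $(R_\infty,\varphi_\Lambda)$ as a compressed corner of a crossed product of the AFD type ${\rm II}_\infty$ factor $R_{0,1}$. Let $\alpha\colon \Lambda \to \Aut(R_{0,1})$ be the restriction to $\Lambda$ of the essentially unique outer trace-scaling action of $\R_{>0}$ (Connes' flow of weights of $R_\infty$), and set $M = R_{0,1}\rtimes_\alpha \Lambda$. Because $\alpha$ is outer and trace-scaling by a dense subgroup, $M$ is a type ${\rm III}_1$ factor; because $R_{0,1}$ is AFD and $\Lambda$ is abelian, $M$ is AFD, so $M\cong R_\infty$ by the Connes--Haagerup uniqueness theorem. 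The dual action $\widehat\alpha\colon G=\widehat\Lambda\to \Aut(M)$ fixes $R_{0,1}$ pointwise, and the dual weight $\widehat\tau$ has modular group $\sigma^{\widehat\tau}_t=\widehat\alpha_{\widehat\beta(t)}$; since $\widehat\beta(\R)$ is dense in the compact group $G$, pointwise fixed points of $\sigma^{\widehat\tau}$ coincide with those of the full $G$-action, giving $M_{\widehat\tau}=R_{0,1}$ and showing that $\widehat\tau$ is $\Lambda$-almost periodic.

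To pass to a state, I would choose a projection $p\in R_{0,1}$ with $\tau(p)=1$ (of full central support in $M$) and set $\varphi_\Lambda \coloneqq \widehat\tau|_{pMp}$, a faithful normal state on $pMp\cong R_\infty$. Its centralizer is $(pMp)_{\varphi_\Lambda}=pR_{0,1}p\cong R$, the AFD type ${\rm II}_1$ factor, hence a factor; therefore $\varphi_\Lambda$ is extremal. Since each canonical unitary $u_\lambda\in M$ is a $\lambda$-eigenvector of $\Delta_{\widehat\tau}$ and can be compressed to a nonzero element of $pMp$ by choosing partial isometries in the factor $R_{0,1}$ connecting subprojections of $p$ and $\alpha_\lambda(p)$, we obtain $\Lambda\subseteq \sigma_p(\Delta_{\varphi_\Lambda})$, and combined with $\Lambda$-almost periodicity this forces $\sigma_p(\Delta_{\varphi_\Lambda})=\Lambda$.

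\textbf{Part (ii).} This is a direct tensor-product computation. The modular operator factors as $\Delta_\psi = \Delta_{\varphi_P}\otimes \Delta_{\varphi_\Lambda}$, which is diagonalizable as a tensor product of diagonalizable operators; hence $\psi$ is almost periodic. The point spectrum of such a tensor product equals the product of the individual point spectra, so $\sigma_p(\Delta_\psi) = \sigma_p(\Delta_{\varphi_P})\cdot \Lambda = \Lambda$ (using $\sigma_p(\Delta_{\varphi_P})\subseteq \Lambda$ and the fact that $\Lambda$ is a subgroup). The centralizer decomposes as $(P\ovt R_\infty)_\psi = P_{\varphi_P}\ovt (R_\infty)_{\varphi_\Lambda}$; under the standing assumption (in the paper's context) that $\varphi_P$ is extremal on the factor $P$, both tensor factors are factors, so $(P\ovt R_\infty)_\psi$ is a factor and $\psi$ is extremal.

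\textbf{Main obstacle.} The only nontrivial input is the construction in (i), whose crux is the identification $M_{\widehat\tau}=R_{0,1}$: this relies on the density of $\widehat\beta(\R)$ in the compact group $G$ so that the fixed points of the modular $\R$-flow coincide with those of the full dual $G$-action. Everything else reduces to standard dual-weight bookkeeping, the Connes--Haagerup uniqueness of $R_\infty$, and the familiar behavior of point spectra and centralizers under tensor products.
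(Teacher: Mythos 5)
Your part (i) is correct but follows a genuinely different route from the paper. You realize $(R_\infty,\varphi_\Lambda)$ as a corner of the crossed product $R_{0,1}\rtimes_\alpha\Lambda$ by a trace-scaling action, identify the centralizer of the dual weight with $R_{0,1}$ via density of $\widehat\beta(\R)$ in $\widehat\Lambda$, and compress by a trace-one projection; the eigenvector count via the canonical unitaries $u_\lambda$ (suitably cut by partial isometries between $p$ and $\alpha_\lambda(p)$) is fine. The paper instead takes the infinite tensor product $\bigovt_{\lambda\in\Lambda\cap(0,1)}(R_\lambda,\varphi_\lambda)$ of Powers factors with the product state, which gives extremality and $\sigma_p(\Delta_{\varphi_\Lambda})=\Lambda$ with almost no work. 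Your construction is heavier on structure theory (outerness of trace-scaling actions, dual weights, the discrete-core picture) but makes the relation to the centralizer more transparent; both are valid. (Only a terminological quibble: the trace-scaling $\R_{>0}$-action on $R_{0,1}$ is the one coming from the continuous core of $R_\infty$, not literally ``the flow of weights,'' which for a type ${\rm III}_1$ factor is trivial.)

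Part (ii) has a genuine gap. The identity $(P\ovt R_\infty)_\psi=P_{\varphi_P}\ovt(R_\infty)_{\varphi_\Lambda}$ is false in general: whenever $p_\lambda\in P$ is a $\lambda$-eigenvector for $\varphi_P$ and $a_{\lambda^{-1}}\in R_\infty$ is a $\lambda^{-1}$-eigenvector for $\varphi_\Lambda$ (such elements exist for every $\lambda\in\sigma_p(\Delta_{\varphi_P})$ precisely because $\Lambda$ is a group containing $\sigma_p(\Delta_{\varphi_P})$), the product $p_\lambda\otimes a_{\lambda^{-1}}$ lies in $(P\ovt R_\infty)_\psi$ but not in $P_{\varphi_P}\ovt(R_\infty)_{\varphi_\Lambda}$. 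Moreover, the ``standing assumption'' that $\varphi_P$ is extremal is not part of the hypothesis and is not available where the lemma is used: in the proof of Theorem A, $\varphi_P$ is an arbitrary almost periodic state, and the entire point of item (ii) is to upgrade it to an extremal state by tensoring with $\varphi_\Lambda$. So your argument both assumes what is not given and, even granting it, rests on a false decomposition. The correct argument shows directly that $(P\ovt R_\infty)_\psi'\cap(P\ovt R_\infty)=\C1$: since $\varphi_\Lambda$ is extremal, $(R_\infty)_{\varphi_\Lambda}\subset R_\infty$ is irreducible, so any element of the relative commutant is of the form $y\otimes1$; then commutation with the cross terms $p_\lambda\otimes a_{\lambda^{-1}}$ described above forces $y$ to commute with every eigenvector of $\varphi_P$, hence $y\in\C1$ by almost periodicity. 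Thus the centralizer is irreducible, in particular a factor, and $\psi$ is extremal. The computation $\sigma_p(\Delta_\psi)=\sigma_p(\Delta_{\varphi_P})\cdot\Lambda=\Lambda$ in your write-up is fine.
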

\begin{proof}
	$(\rm i)$ For every $\lambda \in (0, 1)$, denote by $(R_\lambda, \varphi_\lambda)$ the Powers type ${\rm III}_\lambda$ factor endowed with its canonical $\frac{2\pi}{|\log(\lambda)|}$-periodic faithful normal state. By uniqueness of the amenable type ${\rm III}_1$ factor, we may consider the isomorphism $R_\infty = \bigovt_{\lambda\in \Lambda \cap (0, 1)} (R_\lambda, \varphi_\lambda)$. We define $\varphi_\Lambda\in (R_\infty)_*$ as the tensor product state. Since $(R_\lambda)_{\varphi_\lambda}' \cap R_\lambda = \C 1$ for every $\lambda \in (0, 1)$, it follows that $\varphi_\Lambda$ is extremal. Moreover, we have $\sigma_p(\Delta_{\varphi_\Lambda}) = \Lambda$.
	
	$(\rm ii)$ It is easy to see that $\sigma_p(\Delta_{\psi}) = \sigma_p(\Delta_{\varphi_P}) \sigma_p(\Delta_{\varphi_\Lambda}) = \Lambda$. 
Next observe that
	$$ (P\ovt R_\infty)_{\psi}'\cap (P\ovt R_\infty) \subset (\C 1 \ovt (R_\infty)_{\varphi_\Lambda})'\cap (P\ovt R_\infty) = P\ovt \C1.$$
Take any $x\in (P\ovt R_\infty)_{\psi}'\cap (P\ovt R_\infty)$, which is of the form $x= y\otimes 1$. 
Take any $\lambda\in \sigma_p(\Delta_{\varphi_P})$ and any $\lambda$-eigenvector $p_\lambda\in P$. Since $\lambda^{-1}$ is an eigenvalue of $\Delta_{\varphi_\Lambda}$, there is a nonzero $\lambda^{-1}$-eigenvector $a_\lambda\in R_\infty$. Then $x$ commutes with $p_\lambda\otimes a_\lambda\in (P\ovt R_\infty)_{\psi}$, hence 
	$$p_\lambda y\otimes a_\lambda= (p_\lambda\otimes a_\lambda)x=x(p_\lambda\otimes a_\lambda)=yp_\lambda \otimes a_\lambda .$$
We obtain $p_\lambda y = yp_\lambda$. Since $P$ is generated by eigenvectors, this implies that $y\in \C 1$.
\end{proof}

\subsection{Proofs of Theorems \ref{thmA} and \ref{thmB}}

Theorems \ref{thmA} and \ref{thmB} will be deduced from the following general theorem.

\begin{thm}\label{thm-HS-conjecture}
	Let $(M, \varphi)$ be any type ${\rm III_1}$ factor endowed with an extremal almost periodic faithful normal state $\varphi$. Set $\Lambda = \sigma_p(\Delta_\varphi)$. Let $R_\infty \cong N \subset M$ be any Araki--Woods subfactor that is globally invariant under $\sigma^\varphi$ and such that $\varphi|_N$ is extremal and $\Lambda = \sigma_p(\Delta_{\varphi|_N})$. Denote by $\rE : M \to N$ the unique $\varphi$-preserving conditional expectation.

If $(M, N, \varphi)$ is intertwining stable, then $M$ satisfies {\bf HSC}.
\end{thm}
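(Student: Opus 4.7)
My plan is to combine the characterization of locally pointwise inner automorphisms from Theorem \ref{thm-PI} with the intertwining stability hypothesis and Haagerup--St\o rmer's resolution of HSC for $R_\infty$ \cite{HS91}.

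Given a pointwise inner $\Theta \in \Aut(M)$, I would first use Lemma \ref{lem-pointinner} to replace $\Theta$ by $\Ad(u) \circ \Theta$ so that $\Theta(\varphi) = \varphi$ and $\Theta$ is locally pointwise inner for $\varphi$. Theorem \ref{thm-PI} then furnishes a decomposition $\Theta = \Ad(v) \circ \sigma^{\Lambda, \varphi}_g$ with $v \in \mathcal U(M_\varphi)$ and $g \in G = \widehat \Lambda$. The entire problem reduces to showing $g \in \widehat \beta(\R)$, because in that case $\sigma^{\Lambda, \varphi}_g = \sigma^\varphi_t$ for some $t \in \R$, and $\Theta$ has the HSC form.

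To pin down $g$, I would transfer pointwise innerness to $N$. Set $\Theta_0 = \sigma^{\Lambda, \varphi}_g = \Ad(v^*) \circ \Theta$; as a composition of an inner and a pointwise inner automorphism, $\Theta_0$ is pointwise inner on $M$, preserves $\varphi$, and commutes with $\rE$ (since $\sigma^\varphi$ does, by Takesaki's theorem). Pointwise innerness of $\Theta_0$ gives $\Theta_0(\psi \circ \rE) \sim_M \psi \circ \rE$ via a unitary for every $\psi \in (N_*)^+$, and intertwining stability of $(M, N, \varphi)$ then yields $\sigma^{\Lambda, \varphi|_N}_g(\psi) \sim_N \psi$ for every such $\psi$.

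The next step is to upgrade this partial-isometry intertwining to genuine pointwise innerness of $\sigma^{\Lambda, \varphi|_N}_g$ on $N \cong R_\infty$. Given a partial isometry $v \in N$ with $\sigma^{\Lambda, \varphi|_N}_g(\psi) v = \lambda v \psi$, one has $v^*v \in N_\psi$ and $vv^* \in N_{\sigma^{\Lambda, \varphi|_N}_g(\psi)}$. Using Lemma \ref{lem-transitivity} to reduce to $\psi = \varphi|_N$ (whose centralizer is a factor), the amplification trick of Lemma \ref{lem-amplification} (making the relevant centralizer a type II$_\infty$ factor), and a tiling argument in the spirit of Lemma \ref{lem-eigenvector-unitary}, these partial isometries stitch together into a unitary $V \in \mathcal U(N)$ implementing $\sigma^{\Lambda, \varphi|_N}_g(\psi) = V\psi V^*$. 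Once $\sigma^{\Lambda, \varphi|_N}_g$ is known pointwise inner on $R_\infty$, HSC for $R_\infty$ (\cite{HS91}) yields $\sigma^{\Lambda, \varphi|_N}_g = \Ad(u) \circ \sigma^{\varphi|_N}_s = \Ad(u') \circ \sigma^{\Lambda, \varphi|_N}_{\widehat \beta(s)}$, after using Connes' cocycle to replace the state witnessing HSC by $\varphi|_N$; Lemma \ref{lem-injective-modular} applied inside $N$ then forces $g = \widehat \beta(s) \in \widehat \beta(\R)$, as desired.

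The hard part will be the upgrade in the preceding paragraph: promoting the partial-isometry intertwining $\sigma^{\Lambda, \varphi|_N}_g(\psi) \sim_N \psi$ to unitary conjugation for an arbitrary (not necessarily almost periodic) state $\psi$ on $N$, whose centralizer $N_\psi$ need not be a type II$_\infty$ factor. Factoriality of $N_{\varphi|_N}$ (to reduce to the canonical state) and the amplification trick are the essential tools for navigating this obstacle.
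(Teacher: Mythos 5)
Your opening and closing moves match the paper: reduce to $\Theta = \sigma^{\Lambda,\varphi}_g$ via Lemma \ref{lem-pointinner} and Theorem \ref{thm-PI}, transfer intertwining information to $N$ via intertwining stability, and finish with Lemma \ref{lem-injective-modular}. The genuine gap is the ``upgrade'' step in the middle, and it is not a technicality that the lemmas you cite can fill. Intertwining stability only hands you $\sigma^{\Lambda,\varphi_N}_g(\psi) \sim_N \psi$, i.e.\ a single nonzero partial isometry $v$ with $\sigma^{\Lambda,\varphi_N}_g(\psi)\, v = \lambda v \psi$ for some $\lambda>0$; its supports $v^*v \in N_\psi$ and $vv^* \in N_{\sigma_g(\psi)}$ may be arbitrarily small, $N_\psi$ is in general not a factor (so there is no room to move $v$ around and tile), and $\lambda$ need not be $1$. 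Lemma \ref{lem-transitivity} cannot ``reduce to $\psi = \varphi|_N$'': it takes as \emph{hypotheses} that both functionals are already intertwined with $\varphi_N$, whereas a generic normal state on $N$ is not intertwined with $\varphi_N$ at all. More fundamentally, the implication you are after --- $\Theta(\psi)\sim_N\psi$ for all $\psi$ forces $\Theta$ to be pointwise inner --- runs against the grain of the whole construction: the paper introduces the weaker locally pointwise inner notion precisely because full pointwise innerness does not pass to subalgebras, and it never upgrades back to it.

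There is a second obstruction even granting the transfer: knowing that $\sigma^{\Lambda,\varphi_N}_g$ is locally pointwise inner for $\varphi_N$ carries no information about $g$, since by Theorem \ref{thm-PI} (iv)$\Rightarrow$(i) \emph{every} $\sigma^{\Lambda,\varphi_N}_g$ has that property. What pins down $g$ must come from testing the automorphism against a different invariant weight, and this is the step your plan omits. The paper writes $N = R_\infty\ovt R$, picks an extremal almost periodic state $\psi_0$ on $R_\infty$ with $\sigma_p(\Delta_{\psi_0})=\Lambda$, and invokes the Kawahigashi--Sutherland--Takesaki classification (Lemma \ref{lem-outer-conjugacy}) to write $\Ad(u)\circ\Theta_N = \pi\circ(\sigma^{\psi_0}_t\otimes\beta)\circ\pi^{-1}$, which visibly preserves the extremal almost periodic state $\omega = \pi(\psi_0\otimes\tau)$. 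Proposition \ref{prop-stable} transfers ``locally pointwise inner for $\omega\circ\rE$'' on $M$ to ``locally pointwise inner for $\omega$'' on $N$; Theorem \ref{thm-PI} applied to $(N,\omega)$ then shows $\Ad(u)\circ\Theta_N$ is inner on $N_\omega = \pi((R_\infty)_{\psi_0}\ovt R)$, hence $\beta$ is inner, hence $\Ad(w)\circ\Theta_N = \sigma^{\varphi_N}_t$ after a cocycle conjugacy, and only then does Lemma \ref{lem-injective-modular} give $g = t \in \widehat\beta(\R)$. Replacing your upgrade step by this argument repairs the proof; as written, the proposal does not go through.
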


\begin{proof}
Denote by $G = \widehat{\Lambda}$ the dual group and by $\sigma^{\Lambda, \varphi} : G \curvearrowright M$ the unique continuous extension of $\sigma^\varphi : \R \curvearrowright M$. Let $\Theta\in \Aut(M)$ be any pointwise inner automorphism. Up to conjugating by a unitary, we may assume that $\Theta(\varphi)=\varphi$. Then Lemma \ref{lem-pointinner} implies that $\Theta$ is locally pointwise inner for $\varphi$ and Theorem \ref{thm-PI} implies that there exist $g\in G$ and $u\in \mathcal U(M_\varphi)$ such that $\Theta=\Ad(u)\circ \sigma_g^{\Lambda,\varphi}$. Then we may assume that $\Theta = \sigma_g^{\Lambda,\varphi}$ so that $\Theta(N)=N$ and $\Theta\circ \rE = \rE \circ \Theta$. Set $\Theta_N =\Theta|_N$, $\varphi_N = \varphi|_N$ and note that $\Theta_N=\sigma^{\Lambda,\varphi_N}_g$ since  $\sigma_p(\Delta_{\varphi_N})= \Lambda$.

Let $(R,\tau)$ be the unique AFD type $\rm II_1 $ factor endowed with its trace. Let $\psi \in (R_\infty)_\ast$ be any extremal almost periodic faithful normal state such that $\Lambda = \sigma_p(\Delta_\psi)$ (see Lemma \ref{lem-amenable-eigenvalue}). 
Fix an isomorphism $N = R_\infty \ovt R$. Then by Lemma \ref{lem-outer-conjugacy}, there exist $u\in \mathcal U(N)$, $\beta\in \Aut(R)$, $\pi \in \Aut(R_\infty\ovt R)$, and $t\in \R$ such that 
	$$\Theta_N^u =\Ad(u)\circ \Theta_{N} =\pi\circ( \sigma^\psi_t\otimes \beta)\circ \pi^{-1}.$$
Observe that $\sigma^\psi_t\otimes \beta$ preserves $\psi\otimes \tau$, hence $\Theta_N^u$ preserves $\pi(\psi\otimes \tau)= \omega$. 
Then observe that $\Theta^u = \Ad(u)\circ \Theta$ is pointwise inner on $M$. Since $(M, N, \varphi)$ is intertwining stable, Proposition \ref{prop-stable} implies that $\Theta^u_N$ is locally pointwise inner for $\omega$. Then by Theorem \ref{thm-PI}, $\Theta_N^u$ is inner on $N_\omega$. 

Here we claim that $\beta$ is inner. For this, observe that 
	$$N_\omega = (R_\infty\ovt R)_{\pi(\psi\otimes \tau)}=\pi((R_\infty\ovt R)_{\psi\otimes \tau} )=\pi((R_\infty)_{\psi}\ovt R ).$$
Since $\Theta_N^u$ is inner on $N_\omega$, we have that $\pi^{-1}\circ \Theta_N^u\circ \pi=\sigma_t^{\psi}\otimes \beta$ is inner on $(R_\infty)_{\psi}\ovt R$. We conclude that $\beta$ is inner on $R$. Thus, the claim is proven.

It follows that there exists $w\in \mathcal U(N)$ such that
	$$\Ad(w)\circ \Theta_N =\pi\circ( \sigma^\psi_t\otimes \id)\circ \pi^{-1} = \pi\circ( \sigma^{\psi\otimes \tau}_t)\circ \pi^{-1} = \sigma^{\pi(\psi\otimes \tau)}_t.$$
Since all modular automorphism groups are cocycle conjugate \cite{Co72}, up to modifying $w \in \mathcal U(N)$ if necessary, we obtain $\Ad(w)\circ \Theta_N = \sigma^{\varphi_N}_t$. 

We have proved that $\Ad(w)\circ \sigma^{\Lambda,\varphi_N}_g = \sigma^{\varphi_N}_t$. Since $N_{\varphi_N}$ is a factor and since $\sigma_p(\Delta_{\varphi_N})=\Lambda $, Lemma \ref{lem-injective-modular} implies that $g=t\in \R$. Thus, $\Theta = \Ad(w^*) \circ  \sigma_g^{\Lambda,\varphi} = \Ad(w^*) \circ \sigma_t^{\varphi}$. 
\end{proof}

\begin{proof}[Proof of Theorem \ref{thmA}]
	Fix an isomorphism $M = R_\infty \ovt P$. By Lemma \ref{lem-amenable-eigenvalue}, we may choose an extremal almost periodic faithful normal state $\varphi \in R_\infty$ and an almost periodic state $\varphi_P \in P_\ast$ so that $\psi = \varphi_{R_\infty} \otimes \varphi_P \in M_\ast$ is extremal and $\sigma_p(\Delta_\psi) = \sigma_p(\Delta_{\varphi_{R_\infty}})$. A combination of Proposition \ref{prop-stability-tensor} and Theorem \ref{thm-HS-conjecture} implies that $M$ satisfies {\bf HSC}.
\end{proof}

Next, we prove that many almost periodic free product factors satisfy {\bf HSC}.

\begin{thm}\label{thm-HS-free}
Write $N = R_\infty$. Let $(M, \varphi_M) = (N, \varphi_N) \ast (P, \varphi_P)$ be any free product von Neumann algebra where $\varphi_N \in N_\ast$ is an extremal almost periodic faithful normal state and $\varphi_P \in P_\ast$ is an almost periodic faithful normal state such that $\sigma_p(\Delta_{\varphi_P}) \subset \sigma_p(\Delta_{\varphi_N})$. Then the type ${\rm III}_1$ factor $M$ satisfies {\bf HSC}. 
\end{thm}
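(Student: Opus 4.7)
The plan is to apply Theorem \ref{thm-HS-conjecture} directly, taking $\varphi = \varphi_M$, $\Lambda = \sigma_p(\Delta_{\varphi_N})$, and $N = R_\infty$ as the distinguished Araki--Woods subfactor sitting canonically inside the free product. The $\varphi_M$-preserving conditional expectation $\rE \colon M \to N$ is the canonical one from the free product construction.

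First, I would verify that $\varphi_M$ is extremal almost periodic with $\sigma_p(\Delta_{\varphi_M}) = \Lambda$ and that $M$ is a type ${\rm III}_1$ factor. The modular group $\sigma^{\varphi_M}$ acts componentwise, restricting to $\sigma^{\varphi_N}$ on $N$ and $\sigma^{\varphi_P}$ on $P$, both of which are almost periodic with point spectra in $\Lambda$. Alternating words with centered letters are eigenvectors whose eigenvalues are products of those of their letters, so $\varphi_M$ is $\Lambda$-almost periodic with $\sigma_p(\Delta_{\varphi_M}) = \Lambda$. Since $\varphi_N$ is extremal on $R_\infty$ and $R_\infty$ is type ${\rm III}_1$, the group $\Lambda$ is dense in $\R_{>0}$. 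Factoriality of the centralizer $M_{\varphi_M}$ follows from standard factoriality results for almost periodic free product states (using that $(R_\infty)_{\varphi_N}$ is a diffuse factor and free with $P_{\varphi_P}$), and this together with density of $\Lambda$ puts $M$ into type ${\rm III}_1$.

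Second, the subfactor $N \subset M$ is globally invariant under $\sigma^{\varphi_M}$ (hence under the continuous extension $\sigma^{\Lambda, \varphi_M}$), and $\varphi_N = \varphi_M|_N$ is extremal almost periodic with $\sigma_p(\Delta_{\varphi_N}) = \Lambda$ by hypothesis. Since $N_{\varphi_N}$ is a factor, Proposition \ref{prop-stability-free} applies and shows that $(M, N, \varphi_M)$ is intertwining stable. All the hypotheses of Theorem \ref{thm-HS-conjecture} are thus verified, and that theorem yields $M$ satisfies \textbf{HSC}.

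The main obstacle I expect is the first step: rigorously verifying extremality of $\varphi_M$ and the type ${\rm III}_1$ classification of $M$. The almost periodicity of $\varphi_M$ and the identification of its point spectrum are essentially formal consequences of the free product structure, but factoriality of $M_{\varphi_M}$ is the delicate input, requiring results on free products of almost periodic states in the spirit of the work on free Araki--Woods factors. Once these structural facts are in place, the conclusion is immediate by combining Proposition \ref{prop-stability-free} with Theorem \ref{thm-HS-conjecture}.
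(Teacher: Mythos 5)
Your proposal is correct and follows essentially the same route as the paper: establish that $\varphi_M$ is an extremal almost periodic state with $\sigma_p(\Delta_{\varphi_M}) = \sigma_p(\Delta_{\varphi_N})$ and that $M$ is a type ${\rm III}_1$ factor, then combine Proposition \ref{prop-stability-free} with Theorem \ref{thm-HS-conjecture}. The structural facts you rightly flag as the main obstacle (extremality of $\varphi_M$ and the type classification) are exactly what the paper imports as a black box from Ueda's theorem on free product factors \cite[Theorem 2.1]{Ue11}.
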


\begin{proof}
By \cite[Theorem 2.1]{Ue11}, $M$ is a full type ${\rm III}_1$ factor, $\varphi_M$ is an extremal almost periodic faithful normal state and $\sigma_p(\Delta_{\varphi_M}) = \sigma_p(\Delta_{\varphi_N})$. A combination of Proposition \ref{prop-stability-free} and Theorem \ref{thm-HS-conjecture} implies that $M$ satisfies {\bf HSC}.
\end{proof}

Theorem \ref{thmB} is simply a consequence of Theorem \ref{thm-HS-free} and \cite[Theorem B]{HN18}.

\bibliographystyle{plain}

\end{document}